\numberwithin{equation}{section}
\begin{document}

\title[On Hecke lifting conjecture for framed knots]
{On Hecke lifting conjecture for framed knots}
\author[Shengmao Zhu]{Shengmao Zhu}
\address{Department of mathematics \\
Zhejiang Normal University  \\
Jinhua,  Zhejiang {\rm 321004}, China }
\email{szhu@zju.edu.cn}

\begin{abstract}
Motivated by an amazing integrality structure conjecture for the $U(N)$ Chern-Simons quantum invariants of framed knots investigated by Mari\~no and Vafa, a new conjectural formula, named Hecke lifting conjecture, was proposed in \cite{CLPZ23} for framed links.  This note is devoted to the study of this Hecke lifting conjecture. We prove this conjecture for torus knots using the explicit formulas of colored HOMFLY-PT invariants of torus knots, and we also verify the conjecture in a limit form for any framed knots. 
\end{abstract}

\maketitle

\theoremstyle{plain} \newtheorem{thm}{Theorem}[section] \newtheorem{theorem}[%
thm]{Theorem} \newtheorem{lemma}[thm]{Lemma} \newtheorem{corollary}[thm]{%
Corollary} \newtheorem{proposition}[thm]{Proposition} \newtheorem{conjecture}%
[thm]{Conjecture} \theoremstyle{definition}
\newtheorem{remark}[thm]{Remark}
\newtheorem{remarks}[thm]{Remarks} \newtheorem{definition}[thm]{Definition}
\newtheorem{example}[thm]{Example}






\section{Introduction}

The seminal work \cite{Witten} of E. Witten
showed that Chern-Simons gauge theory provides a natural way to
study the quantum invariants \cite{Jones}. In this framework, the expectation
value of Wilson loop along a link $\mathcal{L}$ in $S^3$ gives a
topological invariant of the link depending on the representation of
the gauge group. N. Reshetikhin and V. Turaev \cite{RT} gave a
mathematical construction of this link invariant by using the
representation theory of the quantum group. In particular, the gauge group $%
SU(N)$ with irreducible representation will give rise to the colored
HOMFLY-PT invariant of the link $\mathcal{L}$. In another
fundamental work of Witten \cite{Witten2}, the $U(N)$ Chern-Simons
gauge theory on a
three-manifold $M$ was interpreted as an open topological string theory on $%
T^*M$ with $N$ topological branes wrapping $M$ inside $T^*M$.
Furthermore, Gopakumar-Vafa \cite{GV} conjectured that the large $N$
limit of $SU(N)$ Chern-Simons gauge theory on $S^3$ is equivalent to
the closed topological string theory on the resolved conifold. This
highly nontrivial string
duality was first checked for the case of the unknot by Ooguri-Vafa \cite{OV}%
. Later, a series of work \cite{LMV,LM} based on the large $N$
Chern-Simons/topological string duality, conjectured an expansion of
the Chern-Simons partition functions in terms of an infinite
sequence of integer invariants, which is called the
Labastida-Mari\~no-Ooguri-Vafa (LMOV) conjecture. This integrality
conjecture serves as an essential evidence of the
Chern-Simons/topological string duality and was proved in
\cite{LP1}. When considering the framing dependence for $U(N)$
Chern-Simons gauge theory, the integrality structure is even more amazing as described in \cite
{MV}. This framing dependence integrality structure conjecture is called the framed Labastida-Mari\~no-Ooguri-Vafa (framed LMOV) conjecture in \cite{CLPZ23}. In the joint work \cite{CLPZ23} with Chen, Liu and Peng, we proposed Hecke lifting conjecture for framed links in the process of attacking the framed LMOV conjecture. We also proposed two congruence skein relations, which imply the Hecke lifting conjecture for framed links, and proved that these congruence skein relations hold in many different situations. The goal of this paper is to provide a direct approach to the Hecke lifting conjecture, and we hope to show some new insights on the framed LMOV conjecture. Now, let us  briefly describe the main results of this article.

\subsection{Hecke lifting conjecture for framed links}
The HOMFLY-PT polynomial is a two-variable link invariant which was
first discovered by Freyd-Yetter, Lickorish-Millet, Ocneanu, Hoste
and Przytycki-Traczyk.  Let $\mathcal{L}$ be
an oriented link in $S^3$, the framed HOMFLY-PT polynomial of $\mathcal{L}$ is a two-variable polynomial denoted by 
$\mathcal{H}(\mathcal{L};q,a)$ (cf. (\ref{formula-H}) for the definition).  
Suppose $\mathcal{L}$ has $L$ components $\mathcal{K}_\alpha$ ($%
\alpha=1,...,L$), and each component $\mathcal{K}_\alpha$ has framing $\tau^\alpha\in \mathbb{Z}$. Taking $\vec{\tau}=(\tau^1,..,\tau^L)$, we say that $\mathcal{L}$ has framing $\vec{\tau}$.  Let $\vec{\lambda}=(\lambda^1,...,\lambda^L)$,
where each $\lambda^\alpha$  $(\alpha=1,..,L)$ denotes a partition
of a positive integer, the framed colored HOMFLY-PT
invariant of $\mathcal{L}$ colored by $\vec{\lambda}$ is defined as
\begin{align}
 \mathcal{H}(\mathcal{L}\star
\otimes_{\alpha=1}^LQ_{\lambda^\alpha};q,a),
\end{align}
where $\mathcal{L}\star
\otimes_{\alpha=1}^LQ_{\lambda^\alpha}$ denotes the link obtained by  $\mathcal{L}$ decorated
by the
element $\otimes_{\alpha=1}^LQ_{\lambda^\alpha}$, where each $%
Q_{\lambda^\alpha}$ is in the skein of annulus $\mathcal{C}_+$ (cf. Section \ref{section-coloredH} for definitions).

For a
partition $\mu$, let $P_{\mu}=\sum_{\lambda}\chi_{%
\lambda}(\mu)Q_{\lambda}$, where $\chi_{\lambda}(\mu)$ is the value
of the
character $\chi_{\lambda}$ of the symmetric group corresponding to the conjugate class $%
C_{\mu} $. From the point of view of the HOMFLY-PT skein theory, the element
$P_{\mu}\in \mathcal{C}_+$ takes a simple form and has nice
properties; see Section \ref{Section-skein} for detailed descriptions of the skein
elements $Q_{\lambda}$ and $P_\mu$. 

Given $n\in \mathbb{Z}$, we introduce the notations $\{n\}=q^n-q^{-n}$ and  $[n]=\frac{\{n\}}{\{1\}}$. In particular,  we let $z=\{1\}=q-q^{-1}$ throughout this article. 

We study the
following reformulated colored HOMFLY-PT invariant, which is given by
\begin{align}
\mathcal{Z}_{\vec{\mu}}(\mathcal{L};q,a)=\mathcal{H}(\mathcal{L}\star
\otimes_{\alpha=1}^LP_{\mu^\alpha};q,a), \ \check{\mathcal{Z}}_{\vec{\mu}}(%
\mathcal{L};q,a)=\{\vec{\mu}\}\mathcal{Z}_{\vec{\mu}}(\mathcal{L};q,a),
\end{align}
where $\vec{\mu}=(\mu^1,...,\mu^L)$ with each $\mu^\alpha$ partition of a positive integer and
$\{\vec{\mu}\}=\prod_{\alpha=1}^{L}\{\mu^\alpha\}$ (cf. (\ref{formula-quminteger})). In particular, when $\vec{\mu}=((d), ..., (d))$ with $L$ partitions $(d)$ i.e., a single row shape, for $d\in \mathbb{N}$, we use the notation $\mathcal{Z}_d(\mathcal{L};q,a)$ to denote the reformulated framed
colored HOMFLY-PT invariant $\mathcal{Z}_{((d),...,(d))}(\mathcal{L};q,a)$ for simplicity. For brevity,
$\mathcal{Z}_{1}(\mathcal{L};q,a)$ is denoted as $\mathcal{Z}(\mathcal{L};q,a)$.

We introduce the Adams operator 
\begin{align} \label{definition-adams}
   \Psi_d: \mathbb{Q}(q^\pm,
a^\pm)\longrightarrow \mathbb{Q}(q^\pm,a^\pm), \ \  \Psi_d(f(q,a))=f(q^d,a^d)
\end{align}
and use the convention $``A \equiv B \mod C"$ to denote $\frac{A-B}{C} \in \mathbb{Z}[z^2, a^{\pm 1}].$ The following Hecke lifting conjecture for framed links was proposed in \cite{CLPZ23}. 

\begin{conjecture}\label{Heckeliftingconj}
Let $\mathcal{L}$ be a framed oriented link in $S^3$ with framing $\vec{\tau}=(\tau^1,\cdots,\tau^L)$ 
then for any prime number $p$, we have
\begin{align} \label{formula-heckelifting}
\check{\mathcal{Z}}_{p}(\mathcal{L};q,a)\equiv
(-1)^{(p-1)\sum_{\alpha=1}^L\tau^\alpha}
\Psi_p(\check{\mathcal{Z}}(\mathcal{L};q,a)) \mod [p]^2.
\end{align}
\end{conjecture}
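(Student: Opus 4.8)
\medskip

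\noindent\emph{Strategy towards Conjecture~\ref{Heckeliftingconj}.} The plan is to attack the conjecture in two stages: first to prove it exactly for torus knots, where the colored HOMFLY-PT invariants are governed by explicit formulas, and then to verify a limiting version of it for an arbitrary framed knot.

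\smallskip

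For a torus knot $T(m,n)$ with $\gcd(m,n)=1$ and framing $\tau$, the starting point is the Rosso--Jones cabling formula. Decorating by a power sum and carrying out the $m$-cabling turns the satellite operation into the $m$-th Adams operator on the skein of the annulus, and since $\psi_{m}(P_{(d)})=P_{(md)}=\sum_{k=0}^{md-1}(-1)^{k}Q_{(md-k,1^{k})}$ is supported on hook partitions (the non-hook coefficients being $\chi_{\mu}((md))=0$), one obtains
\begin{align*}
\check{\mathcal{Z}}_{d}(T(m,n);q,a)=(-1)^{d\tau}\,\vartheta_{d}\cdot\sum_{k=0}^{md-1}(-1)^{k}\,q^{\,\kappa_{(md-k,1^{k})}f}\,\{d\}\,\dim_{q}(md-k,1^{k}),
\end{align*}
where $\dim_{q}\mu=\mathcal{H}(\mathrm{unknot}\star Q_{\mu})$, $\vartheta_{d}$ is an explicit Laurent monomial in $a,q$ (with a sign), $f=f(m,n,\tau)\in\tfrac1m\mathbb{Z}$, and $\kappa_{\mu}=2\sum_{(i,j)\in\mu}(j-i)$, with $m\mid\kappa_{(md-k,1^{k})}$ so that the $q$-exponents are integral. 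It is also useful to note that the right-hand side of \eqref{formula-heckelifting} is transparent: since $\check{\mathcal{Z}}(\mathcal{L};q,a)\in\mathbb{Z}[z^{2},a^{\pm1}]$ and $\Psi_{p}(z^{2})=\{p\}^{2}=z^{2}[p]^{2}$, we have $\Psi_{p}(\check{\mathcal{Z}}(\mathcal{L};q,a))\equiv\check{\mathcal{Z}}(\mathcal{L};q,a)\big|_{z=0,\,a\mapsto a^{p}}\pmod{[p]^{2}}$, so for $T(m,n)$ the conjecture reduces to showing that the above sum with $d=p$ is congruent, modulo $[p]^{2}$, to $(-1)^{(p-1)\tau}$ times the elementary $z=0$ value of the $d=1$ sum.

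\smallskip

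I would prove this congruence by expanding the $d=p$ sum through the combinatorics of $p$-cores and $p$-quotients. The sign factor is a book-keeping matter: tracking the signs from the framing-change operator (which carries $(-1)^{d\tau}$), from the Rosso--Jones twist, and from the alternating sums shows that their combined effect, when one compares the $d=p$ expression with $\Psi_{p}$ of the $d=1$ expression, is precisely the global factor $(-1)^{(p-1)\tau}$ --- nontrivial only for $p=2$ --- and this is exactly the point at which $\Psi_{p}$ fails to intertwine the ribbon/framing structure. The substantive task is to match the two hook sums modulo $[p]^{2}$. By $\mathrm{spec}\circ\psi_{p}=\Psi_{p}\circ\mathrm{spec}$ for the HOMFLY-PT specialization one has $\Psi_{p}\big(\dim_{q}(m-k',1^{k'})\big)=\sum_{\mu}c^{p}_{(m-k',1^{k'}),\mu}\dim_{q}\mu$, and the partitions $\mu$ occurring here are exactly those of empty $p$-core; using the factorization $[mp]=[p]\,[m]_{q^{p}}$ and the standard expression of $\kappa_{\mu}$ through the $p$-quotient, the empty-$p$-core hooks $\mu=(mp-k,1^{k})$ in the $d=p$ sum should account for $\Psi_{p}$ of the $d=1$ sum up to terms divisible by $[p]^{2}$, while the hooks with nonempty $p$-core should contribute $0$ modulo $[p]^{2}$ by a quantum Lucas/Kummer analysis of the $q$-binomial factors of $\{p\}\dim_{q}(mp-k,1^{k})$, bolstered by the $\{p\}=z[p]$ already present in $\check{\mathcal{Z}}_{p}$.

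\smallskip

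I expect the genuine difficulty to lie in extracting the \emph{second} power of $[p]$ rather than only the first: a term-by-term estimate typically yields only $[p]^{1}$, so the extra factor must come from cancellation inside the alternating sums, which requires a sharp divisibility statement for the relevant $q$-binomials together with careful control of the $q$-exponents $\kappa_{(mp-k,1^{k})}f$ and of how $\Psi_{p}$ redistributes the $p$-quotient data. Finally, for a general framed knot $\mathcal{K}$ no closed formula is available; there I would verify the conjecture in its limiting form obtained by passing to the leading order of the $z$-expansion (equivalently, the $q\to1$ or genus-zero specialization), where $\check{\mathcal{Z}}_{d}(\mathcal{K})$ degenerates to a combinatorial expression on which $\Psi_{p}$ and the framing sign can be compared directly; the mod-$p$ integrality results of \cite{LP1}, combined with the framing-change formula, then yield the congruence modulo $[p]$, and the same limiting statement is also a consequence of the congruence skein relations of \cite{CLPZ23}, which furnish an independent, skein-theoretic route.
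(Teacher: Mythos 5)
Your proposal is a strategy outline rather than a proof, and the two places where it would need to deliver are precisely the places it leaves open. For the torus knot $T_d^m$, you set up the Rosso--Jones expansion in the hook/Schur basis and propose to extract the congruence via $p$-cores, $p$-quotients and a ``quantum Lucas/Kummer analysis'' of $q$-binomials, but you explicitly defer the divisibility by the \emph{second} power of $[p]$, which you correctly identify as the substantive issue. The paper takes a different and more tractable decomposition: it stays in the power-sum basis, where the fractional twist formula gives
$\check{\mathcal{Z}}_{p}(T_{d}^m;q,a)=\{p\}a^{pm}\sum_{|\mu|=pd}\frac{\{\mu\}_a}{\mathfrak{z}_\mu\{\mu\}}\frac{\{pm\mu\}}{\{pm\}}$,
and then splits the sum over $|\mu|=pd$ into the partitions with $p\mid\mu$ (which, after writing $\mu=p\nu$, are matched against $\Psi_p$ of the $d=1$ sum) and those with $p\nmid\mu$ (which are shown to vanish mod $[p]^2$). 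Both divisibility statements are then proved by elementary root-multiplicity analysis of the functions $Q_n(x^k)=\{nk\}_x/\{k\}_x$ at roots of $Q_{pm}(x)Q_p(x)$, using coprimality of $d$ and $m$; no $p$-core combinatorics or $q$-Lucas machinery is needed, and the $[p]^2$ appears directly from the double roots at the $2p$-th roots of unity. Your hook-basis route is not implausible, but as written it contains no argument for the key cancellation, so it does not constitute a proof of Theorem \ref{theorem-torus}.

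For general framed knots, your proposed limit is the wrong one. You suggest passing to the leading order of the $z$-expansion (the $q\to1$ specialization) and invoking the integrality results of \cite{LP1}, and you concede this only yields the congruence modulo $[p]$, not $[p]^2$. The paper's Theorem \ref{theorem-limit} instead takes the limit $a\to 1$ after dividing $g_p(\mathcal{K};q,a)$ by $a-a^{-1}$, retaining the full $q$-dependence; this is the colored Alexander specialization, and the proof rests on the one-hook scaling property $A_{\lambda}(\mathcal{K};q)=A(\mathcal{K};q^{|\lambda|})$ of \cite{MPS25} together with the identity $\sum_{m+n+1=p}q^{(m-n)p\tau}-(-1)^{(p-1)\tau}=[p]^2\alpha_p^{\tau}(z)$, which is where the second power of $[p]$ comes from. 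Your $z\to 0$ limit discards exactly the structure that makes the statement a nontrivial check of the conjecture, so this part of the proposal also falls short of the paper's result. One small point in your favor: the observation that $\Psi_p(\check{\mathcal{Z}})\equiv\check{\mathcal{Z}}|_{z=0,\,a\mapsto a^p}\pmod{[p]^2}$ is correct and consistent with the paper's framework, but it is not by itself enough to close either gap.
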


Conjecture \ref{Heckeliftingconj} was verified in some cases in \cite{CLPZ23}. We should remark that the condition ``$p$ is prime" in the statement of Hecke lifting Conjecture \ref{Heckeliftingconj} is crucial.

\subsection{Main results}
Given two relative prime positive integers $d,m$, let $T_{d}^{m}$ be the $(d,m)$-torus knot of $d$ strands as shown in Figure \ref{beta}. 
\begin{figure}[!htb]
\begin{center}
\includegraphics[width=120 pt]{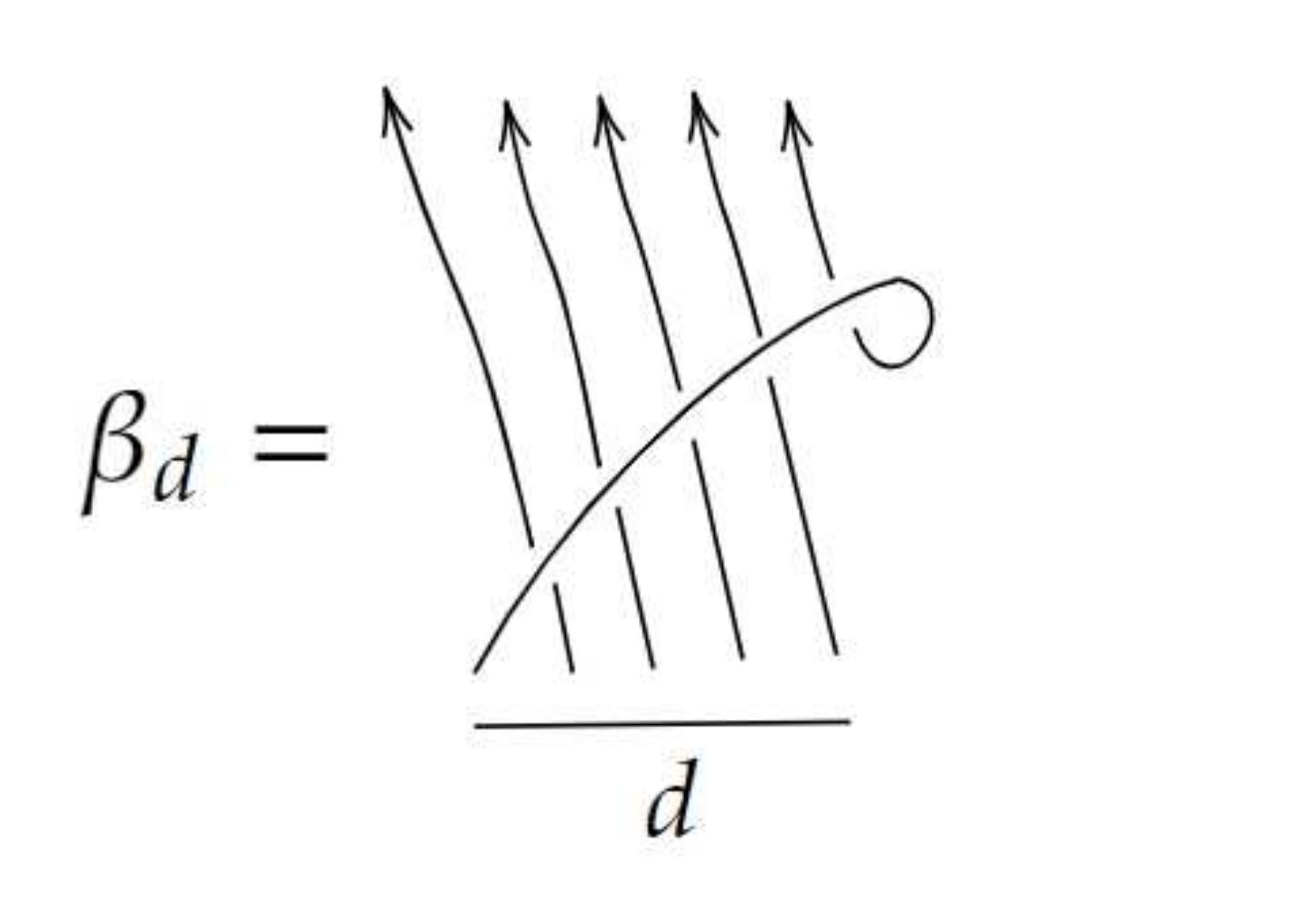}
\end{center}
\caption{$T_{d}^m$ is the closure of $(\beta_d)^m$}
\label{beta}
\end{figure}
Note that $T_{d}^m$ has the framing $dm$. Using the computations in \cite{LZ} for the torus knots, for any $p\in \mathbb{N}$, we obtain the following
\begin{align} \label{formula-Tdm}
\check{\mathcal{Z}}_{p}(T_{d}^m;q,a)&=\{p\}a^{pm}\sum_{|\mu|=pd}\frac{\{\mu\}_a}{\mathfrak{z}_\mu
\{\mu\}}\frac{\{pm\mu\}}{\{pm\}}.
\end{align}
Using the explicit expression (\ref{formula-Tdm}), we prove the following theorem.
\begin{theorem} \label{theorem-torus}
  For the torus knot $T_{d}^m$, the Hecke lifting Conjecture \ref{Heckeliftingconj} holds.   
\end{theorem}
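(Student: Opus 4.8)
The plan is to feed the explicit formula (\ref{formula-Tdm}) into an evaluation at the roots of $[p]$. Write $[n]_x:=\dfrac{x^{n}-x^{-n}}{x-x^{-1}}$, so $[n]=[n]_q$. The identities $\{pm\nu_i\}/\{\nu_i\}=[p]_{q^{\nu_i}}\,[m]_{q^{p\nu_i}}$ and $\{p\}/\{pm\}=1/[m]_{q^{p}}$ rewrite (\ref{formula-Tdm}) as
\[
\check{\mathcal Z}_{p}(T_{d}^{m};q,a)=\frac{a^{pm}}{[m]_{q^{p}}}\sum_{|\nu|=pd}\frac{\{\nu\}_a}{\mathfrak z_\nu}\prod_i[p]_{q^{\nu_i}}\,[m]_{q^{p\nu_i}},
\]
and running the same manipulation at $p=1$ and then applying $\Psi_p$ gives
\[
\Psi_p\big(\check{\mathcal Z}(T_{d}^{m};q,a)\big)=\frac{a^{pm}}{[m]_{q^{p}}}\sum_{|\mu|=d}\frac{\{p\mu\}_a}{\mathfrak z_\mu}\prod_i[m]_{q^{p\mu_i}}.
\]
Since $T_d^m$ has framing $dm$, it suffices to show that
\[
\Delta:=\check{\mathcal Z}_{p}(T_{d}^{m};q,a)-(-1)^{(p-1)dm}\,\Psi_p\big(\check{\mathcal Z}(T_{d}^{m};q,a)\big)
\]
satisfies $\Delta/[p]^{2}\in\mathbb Z[z^{2},a^{\pm1}]$; note that $\Delta\in\mathbb Z[q^{\pm1},a^{\pm1}]$, being a $\mathbb Z$-linear combination of framed colored HOMFLY-PT invariants.

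The roots of $[p]$ are exactly the $2p$-th roots of unity $\zeta$ with $\zeta^{2}\neq1$, and they are simple. Hence $[p]^{2}\mid\Delta$ in $\mathbb Q(q)[a^{\pm1}]$ as soon as $\Delta$ and $\dfrac{d}{dq}\Delta$ vanish at every such $\zeta$; since $[p]$ is primitive, Gauss's lemma then gives $\Delta/[p]^{2}\in\mathbb Z[q^{\pm1},a^{\pm1}]$, and the symmetry of $\Delta$ and $[p]$ under $q\leftrightarrow q^{-1}$ places $\Delta/[p]^{2}$ in $\mathbb Z[z^{2},a^{\pm1}]$. I will verify the two vanishings using only three elementary properties of $[n]_x$: $(i)$ $[n]_\omega=0$ whenever $\omega^{2n}=1\neq\omega^{2}$; $(ii)$ $[n]_{\varepsilon}=n\,\varepsilon^{\,n-1}$ for $\varepsilon=\pm1$; $(iii)$ $\dfrac{d}{dx}[n]_x$ vanishes at $x=\pm1$ (the exponents appearing in the symmetric Laurent polynomial $[n]_x$ come in $\pm$ pairs).

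For the order-one vanishing, set $q=\zeta$ in the first displayed formula. If a partition $\nu$ of $pd$ has a part $\nu_i$ with $p\nmid\nu_i$, then $(\zeta^{\nu_i})^{2p}=1$ while $(\zeta^{\nu_i})^{2}\neq1$, so $[p]_{\zeta^{\nu_i}}=0$ by $(i)$ and that term drops out; thus only the partitions $\nu=p\mu$ with $|\mu|=d$ survive. For these, all arguments $\zeta^{p\mu_i}$ and $\zeta^{p^{2}\mu_i}$ lie in $\{\pm1\}$, so $(ii)$ evaluates every $[p]$- and $[m]$-factor; using also $\mathfrak z_{p\mu}=p^{\ell(\mu)}\mathfrak z_\mu$, the $\ell(\mu)$ powers of $p$ contributed by the $[p]$-factors cancel $\mathfrak z_{p\mu}/\mathfrak z_\mu$, and the remaining sum equals $(\zeta^{p})^{(p-1)dm}$ times the value at $q=\zeta$ of $\Psi_p(\check{\mathcal Z}(T_d^m))$ — the exponent being $(p-1)\sum_i m\mu_i=(p-1)dm$, where the factor $p-1$ comes from $[p]_{\varepsilon}=p\,\varepsilon^{p-1}$ and the factor $m$ from matching $[m]_{\zeta^{p^{2}\mu_i}}$ against the $[m]_{\zeta^{p\mu_i}}$ in the denominator prefactor. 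Finally $(\zeta^{p})^{(p-1)dm}=(-1)^{(p-1)dm}$: this is immediate when $\zeta^{p}=-1$, while $\zeta^{p}=1$ can occur only for odd $p$, and then both sides equal $1$. Hence $\Delta|_{q=\zeta}=0$.

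For the order-two vanishing, a partition $\nu\neq p\mu$ has at least two parts not divisible by $p$ (their sum is a multiple of $p$), hence at least two vanishing factors $[p]_{\zeta^{\nu_i}}$, so its term has a zero of order $\geq2$ at $\zeta$ and its $q$-derivative vanishes there; for $\nu=p\mu$, every $[p]$- and $[m]$-factor, as well as the prefactor $1/[m]_{q^{p}}$, has argument $\pm1$ at $q=\zeta$, so its $q$-derivative vanishes at $\zeta$ by $(iii)$, and the product rule gives $\frac{d}{dq}\check{\mathcal Z}_p(T_d^m)|_{q=\zeta}=0$. The identical argument applied to $\Psi_p(\check{\mathcal Z}(T_d^m))$ gives $\frac{d}{dq}\Psi_p(\check{\mathcal Z}(T_d^m))|_{q=\zeta}=0$, so $\frac{d}{dq}\Delta|_{q=\zeta}=0$. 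Therefore $\Delta$ vanishes to order $\geq2$ at every simple root of $[p]$, and by the reduction of the second paragraph this yields (\ref{formula-heckelifting}) for $T_d^m$. The part I expect to demand the most care is the bookkeeping in the order-one step: producing the scalar $(-1)^{(p-1)dm}$ exactly — this is what forces the framing-dependent sign of Conjecture \ref{Heckeliftingconj} — and it is a little delicate because $\zeta^{p^{2}}=\zeta^{p}$ when $p$ is odd but $\zeta^{p^{2}}=1\neq\zeta^{p}$ when $p=2$, so the two parities must be reconciled against the exponent $(p-1)dm$ separately; the remaining points (legitimacy of clearing denominators, the $q\leftrightarrow q^{-1}$ symmetry, and the Gauss step) are routine.
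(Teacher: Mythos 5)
Your argument is correct and rests on the same core mechanism as the paper's proof: both reduce to showing that $\Delta=\check{\mathcal Z}_p(T_d^m;q,a)-(-1)^{(p-1)dm}\Psi_p(\check{\mathcal Z}(T_d^m;q,a))$ is divisible by $[p]^2$, both observe that a partition $\nu$ of $pd$ with $\nu\neq p\mu$ must have at least two parts prime to $p$ and hence contributes a double zero at every root of $[p]$, and both match the surviving terms $\nu=p\mu$ (using $\mathfrak z_{p\mu}=p^{\ell(\mu)}\mathfrak z_\mu$ and $[p]_{\pm1}=p(\pm1)^{p-1}$) against $\Psi_p(\check{\mathcal Z})$ to produce exactly the sign $(-1)^{(p-1)dm}$. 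The organization differs in a genuine way, though. The paper proves that each individual summand of $\Delta/[p]^2$ already lies in $\mathbb Q[z^2,a^{\pm1}]$ (Lemmas \ref{lemma-1} and \ref{lemma-2}); since each summand carries the full denominator $[p][pm]$, this forces a separate analysis of the roots $\beta$ of $[pm]$ that are not roots of $[p]$, which is where the coprimality $(d,m)=1$ enters. Your factorization $[pm]_q=[p]_q\,[m]_{q^p}$ isolates the extra factor as the prefactor $1/[m]_{q^p}$, which is regular and nonzero at every root of $[p]$, so you never examine those $\beta$'s at all; the price is that you must know \emph{in advance} that the total $\Delta$ is a Laurent polynomial, whereas the paper invokes integrality only at the very end to upgrade $\mathbb Q$ to $\mathbb Z$. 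Your route is cleaner on the root bookkeeping; the paper's yields the finer per-partition statement.

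Two points need patching, neither fatal. First, your justification that $\Delta\in\mathbb Z[q^{\pm1},a^{\pm1}]$ --- ``a $\mathbb Z$-linear combination of framed colored HOMFLY-PT invariants'' --- is not a proof: the invariants $\mathcal H(\mathcal K\star Q_\lambda)$ are not Laurent polynomials in $q$ (already $\mathcal H(U;q,a)=(a-a^{-1})/(q-q^{-1})$), and the integrality of $\check{\mathcal Z}_p$ is a nontrivial theorem. You should cite Proposition 3.2 of \cite{CLPZ23}, the same external input the paper uses, which in fact gives the stronger statement that the difference lies in $\mathbb Z[z^2,a^{\pm1}]$. Second, invariance under $q\leftrightarrow q^{-1}$ alone only places $\Delta/[p]^2$ in $\mathbb Z[q+q^{-1},a^{\pm1}]$; to land in $\mathbb Z[z^2,a^{\pm1}]$ you also need invariance under $q\mapsto-q$. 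This does hold for both $\Delta$ and $[p]^2$, but for $\Delta$ it requires a short check: the overall sign picked up by the explicit formula is $(-1)^{p(1+d)(1+m)}=1$, using that $d$ and $m$ are not both even. With these two repairs your proof is complete.
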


For a framed knot $\mathcal{K}$ with a framing $\tau\in \mathbb{Z}$, we introduce the function
\begin{align}
g_p(\mathcal{K};q,a)&=\check{\mathcal{Z}}_p(\mathcal{K};q,a)-(-1)^{(p-1)\tau}\Psi_{p}(\check{\mathcal{Z}}(\mathcal{K};q,a)).  
\end{align}
Then Hecke lifting Conjecture \ref{Heckeliftingconj} for $\mathcal{K}$ is equivalent to the statement
\begin{align} \label{formula-gpintegral}
g_{p}(\mathcal{K};q,a)\in [p]^2\mathbb{Z}[z^2,a^{\pm 1}].    
\end{align}
Using the factorization property of the colored HOMFLY-PT invariants ( cf.  Theorem 1 in \cite{Morton}), the function $g_p(\mathcal{K};q,a)$ always has a factor $(a-a^{-1})$. In other words, we have the following statement which is stronger than (\ref{formula-gpintegral}): 
\begin{align} \label{formula-gp}
 g_{p}(\mathcal{K};q,a)\in (a-a^{-1})[p]^2\mathbb{Z}[z^2,a^{\pm 1}].   
\end{align}
Therefore, we have
\begin{align}
g_{p}(\mathcal{K};q,1)=0.    
\end{align}
Furthermore, (\ref{formula-gp}) is equivalent to  
\begin{align} \label{formula-gp2}
 \frac{g_{p}(\mathcal{K};q,a)}{a-a^{-1}}\in [p]^2\mathbb{Z}[z^2,a^{\pm 1}].   
\end{align}

Based on a recent result  due to Morozov et al \cite{MPS25}, we prove the following theorem which verifies formula (\ref{formula-gp2}) in a limit form. Hence, it provides evidence for Hecke lifting Conjecture \ref{Heckeliftingconj} for any framed knots.

\begin{theorem} \label{theorem-limit}
Given a framed knot $\mathcal{K}$ with a framing $\tau\in \mathbb{Z}$ and a prime $p$, we have 
 \begin{align}
\lim_{a\rightarrow 1}\frac{g_{p}(\mathcal{K};q,a)}{a-a^{-1}}\in [p]^2\mathbb{Z}[z^2].   
\end{align}   
\end{theorem}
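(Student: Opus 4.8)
The plan is to compute the leading term of the $(a-a^{-1})$‑expansion at $a=1$ of $\check{\mathcal Z}_p(\mathcal K;q,a)$ and of $\Psi_p(\check{\mathcal Z}(\mathcal K;q,a))$ separately, combine them into a closed form for $\lim_{a\to1}g_p(\mathcal K;q,a)/(a-a^{-1})$, and then verify an elementary divisibility in $\mathbb Z[z^2]$. First, since $\mathcal K$ is a knot, $\check{\mathcal Z}_p(\mathcal K)=\{p\}\,\mathcal H(\mathcal K\star P_{(p)};q,a)$ and $\check{\mathcal Z}(\mathcal K)=\{1\}\,\mathcal H(\mathcal K\star Q_{(1)};q,a)$, and Newton's identity $P_{(p)}=\sum_{k=0}^{p-1}(-1)^kQ_{(p-k,1^k)}$ shows that only hook colorings occur. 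Writing the framed colored invariant as $\mathcal H(\mathcal K\star Q_\lambda)=\bigl(q^{\kappa_\lambda}a^{|\lambda|}\bigr)^{\tau}\,\mathcal H(U\star Q_\lambda)\,\widehat H_\lambda(\mathcal K;q,a)$, with $U$ the $0$‑framed unknot, $\kappa_\lambda=\sum_{(i,j)\in\lambda}2(j-i)$ the twist, $\mathcal H(U\star Q_\lambda)$ the quantum dimension, and $\widehat H_\lambda$ the normalized colored HOMFLY‑PT of the underlying $0$‑framed knot, I would record the elementary fact that for a hook $\lambda$ the quantum dimension vanishes simply at $a=1$, with
\[
\lim_{a\to1}\frac{\mathcal H(U\star Q_{(p-k,1^k)};q,a)}{a-a^{-1}}=\frac{(-1)^k}{\{p\}},
\]
which follows from the hook–content formula because the unique content‑zero box is the corner, of hook length $p$, while the arm and leg factors telescope to $1$. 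Together with $\kappa_{(p-k,1^k)}=p(p-1-2k)$ this gives
\[
\lim_{a\to1}\frac{\check{\mathcal Z}_p(\mathcal K;q,a)}{a-a^{-1}}=\sum_{k=0}^{p-1}q^{\tau p(p-1-2k)}\,\widehat H_{(p-k,1^k)}(\mathcal K;q,1),\qquad \lim_{a\to1}\frac{\Psi_p(\check{\mathcal Z}(\mathcal K;q,a))}{a-a^{-1}}=p\,\widehat H_{(1)}(\mathcal K;q^p,1),
\]
the second limit using $\Psi_p(a-a^{-1})=a^p-a^{-p}$ and $(a^p-a^{-p})/(a-a^{-1})\big|_{a=1}=p$.

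The essential ingredient is then the recent theorem of Morozov et al.\ \cite{MPS25}, which evaluates the $a=1$ specialization of the normalized colored HOMFLY‑PT at hook colorings: for a hook $\lambda$ one has $\widehat H_\lambda(\mathcal K;q,a=1)=\Delta_{\mathcal K}(q^{2|\lambda|})$, the symmetrically normalized Alexander polynomial of $\mathcal K$, and in particular this value depends only on $|\lambda|$ and not on the shape of the hook (for $\lambda=(1)$ it reduces to the classical identity that $\Delta_{\mathcal K}$ is the $a=1$ specialization of the HOMFLY‑PT polynomial). Inserting this into the two limits above and summing the balanced geometric progression $\sum_{k=0}^{p-1}q^{\tau p(p-1-2k)}=\{\tau p^2\}/\{\tau p\}$ (interpreted as $p$ when $\tau=0$), I obtain
\[
\lim_{a\to1}\frac{g_p(\mathcal K;q,a)}{a-a^{-1}}=\Delta_{\mathcal K}(q^{2p})\left(\frac{\{\tau p^2\}}{\{\tau p\}}-(-1)^{(p-1)\tau}p\right).
\]

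It remains to show that the right‑hand side lies in $[p]^2\mathbb Z[z^2]$. One has $\Delta_{\mathcal K}(q^{2p})\in\mathbb Z[z^2]$ since it is a polynomial in $q^{2p}+q^{-2p}$, which itself lies in $\mathbb Z[z^2]$; so it suffices to treat the factor $E:=\{\tau p^2\}/\{\tau p\}-(-1)^{(p-1)\tau}p$, which vanishes at $\tau=0$. If $p$ is an odd prime then $(-1)^{(p-1)\tau}=1$, and pairing the exponents symmetrically gives $E=\sum_{i=1}^{(p-1)/2}\{\tau p i\}^2$; since $\{p\}$ divides $\{\tau p i\}$ with quotient a symmetric Laurent polynomial in $q^p$, and $\{p\}^2=[p]^2z^2$, each summand lies in $[p]^2\mathbb Z[z^2]$, hence so does $E$. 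If $p=2$ a direct computation gives $E=\bigl(q^{\tau}-(-1)^{\tau}q^{-\tau}\bigr)^2$, and since $q+q^{-1}$ divides $q^{\tau}-(-1)^{\tau}q^{-\tau}$ one checks that $E\in[2]^2\mathbb Z[z^2]$. Multiplying by $\Delta_{\mathcal K}(q^{2p})$ in either case completes the proof.

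The analytic heart of the argument is the input from \cite{MPS25}: it must be available precisely in the form that the $a=1$ specialization of hook‑colored HOMFLY‑PT of $\mathcal K$ is the single Alexander value $\Delta_{\mathcal K}(q^{2|\lambda|})$, uniformly over hooks $\lambda$, for it is this shape‑independence that collapses the hook sum into one geometric series. The remainder is careful bookkeeping of the twist exponents $q^{\kappa_\lambda}$ and of the sign $(-1)^{(p-1)\tau}$; it is exactly the interplay of this sign with the parity constraints on the exponents in the last step that makes the primality of $p$ indispensable — the analogue of $E$ for $p=4$ already fails to be divisible by $[4]^2$, matching the remark that the hypothesis ``$p$ prime'' is crucial.
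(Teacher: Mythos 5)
Your proposal is correct and follows essentially the same route as the paper: expand $P_{(p)}$ into hook Schur elements, feed in the Morozov--Popolitov--Sleptsov identity $A_\lambda(\mathcal K;q)=A(\mathcal K;q^{|\lambda|})$ for hooks so that the hook sum collapses to the geometric series $\{\tau p^2\}/\{\tau p\}$, and reduce the theorem to the divisibility of $\{\tau p^2\}/\{\tau p\}-(-1)^{(p-1)\tau}p$ by $[p]^2$ in $\mathbb Z[z^2]$. The only (welcome) difference is that you prove this last divisibility directly --- via $E=\sum_{i=1}^{(p-1)/2}\{\tau p i\}^2$ for odd $p$ and the explicit square for $p=2$ --- where the paper cites Lemma 7.8 of \cite{Zhu22}.
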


The remainder of the paper is organized as follows. In Section \ref{section-preliminaries}, we fix the notation frequently
used in this paper and introduce the HOMFLY-PT skein model to give the definition
of reformulated colored HOMFLY-PT invariants. Then the framed LMOV conjecture and Hecke lifting conjecture are introduced. In Section \ref{section-Heckelifting-torus}, we provide a direct proof of the Hecke lifting conjecture for torus knots, i.e. Theorem \ref{theorem-torus}. In Section 
\ref{Section-coloredAlexander}, we use a recent result of the colored Alexander polynomial and its relationship to the Hecke lifting conjecture to prove Theorem \ref{theorem-limit}.

\section{Preliminaries} \label{section-preliminaries}
\subsection{Basic notations}
We first introduce some basic notations that will be used in this paper. A partition $\lambda$ is a finite sequence of positive integers $%
(\lambda_1,\lambda_2,..)$ such that $\lambda_1\geq
\lambda_2\geq\cdots$. The length of $\lambda$ is the total number of
parts in $\lambda$ and denoted by
$l(\lambda)$. The weight of $\lambda$ is defined by $|\lambda|=%
\sum_{i=1}^{l(\lambda)}\lambda_i$. The automorphism group of
$\lambda$, denoted by Aut($\lambda$), contains all the permutations
that
permute parts of $\lambda$ by keeping it as a partition. Obviously, Aut($%
\lambda$) has the order $|\text{Aut}(\lambda)|=\prod_{i=1}^{l(\lambda)}m_i(%
\lambda)! $ where $m_i(\lambda)$ denotes the number of times that
$i$ occurs in $\lambda$. Define
$\mathfrak{z}_{\lambda}=|\text{Aut}(\lambda)|\prod_{i=1}^{\lambda}\lambda_i$.

In the following, we will use the notation $\mathcal{P}_+$ to denote
the set of all the partitions of positive integers. Let $\emptyset$ be the
partition of $0$, i.e. the empty partition. Define
$\mathcal{P}=\mathcal{P}_+\cup \{\emptyset\}$.

The power sum symmetric function of infinite variables
$\mathbf{x}=(x_1,..,x_N,..)$ is defined by
$p_{n}(\mathbf{x})=\sum_{i}x_i^n. $ Given a partition $\lambda$, we
define
$p_\lambda(\mathbf{x})=\prod_{j=1}^{l(\lambda)}p_{\lambda_j}(\mathbf{x}).
$ The Schur function $s_{\lambda}(\mathbf{x})$ is determined by the
Frobenius formula
\begin{align} \label{formula-frob}
s_\lambda(\mathbf{x})=\sum_{\mu}\frac{\chi_{\lambda}(\mu)}{\mathfrak{z}_\mu}p_\mu(\mathbf{x}),
\end{align}
where $\chi_\lambda$ is the character of the irreducible
representation of
the symmetric group $S_{|\lambda|}$ corresponding to $\lambda$, we have $%
\chi_{\lambda}(\mu)=0$ if $|\mu|\neq |\lambda|$. The orthogonality
of character formula gives
\begin{align}
\sum_\lambda\frac{\chi_\lambda(\mu)
\chi_\lambda(\nu)}{\mathfrak{z}_\mu}=\delta_{\mu \nu}.
\end{align}
Let $n\in \mathbb{N}$ and $\lambda,\mu,\nu$ denote the partitions.
We introduce the following notation:
\begin{align} \label{formula-quminteger}
\{n\}_x=x^{n}-x^{-n}, \
\{\mu\}_{x}=\prod_{i=1}^{l(\mu)}\{\mu_i\}_x.
\end{align}
In particular, let $\{n\}=\{n\}_q$ and $\{\mu\}=\{\mu\}_q$.

\subsection{HOMFLY-PT skein module} \label{Section-skein}
We follow the notation in \cite{HM}. Define the coefficient ring $\Lambda=%
\mathbb{Z}[q^{\pm 1}, a^{\pm 1} ]$ with the elements $q^{k}-q^{-k}$
admitted as denominators for $k\geq 1$. Let $F$ be a planar surface,
the framed HOMFLY-PT skein $\mathcal{S}(F)$ of $F$ is the
$\Lambda$-linear combination of the orientated tangles in $F$,
modulo the two local relations as shown in Figure \ref{figure-local}, where
$z=q-q^{-1}$.
\begin{figure}[!htb]
\begin{center}
\includegraphics[width=180 pt]{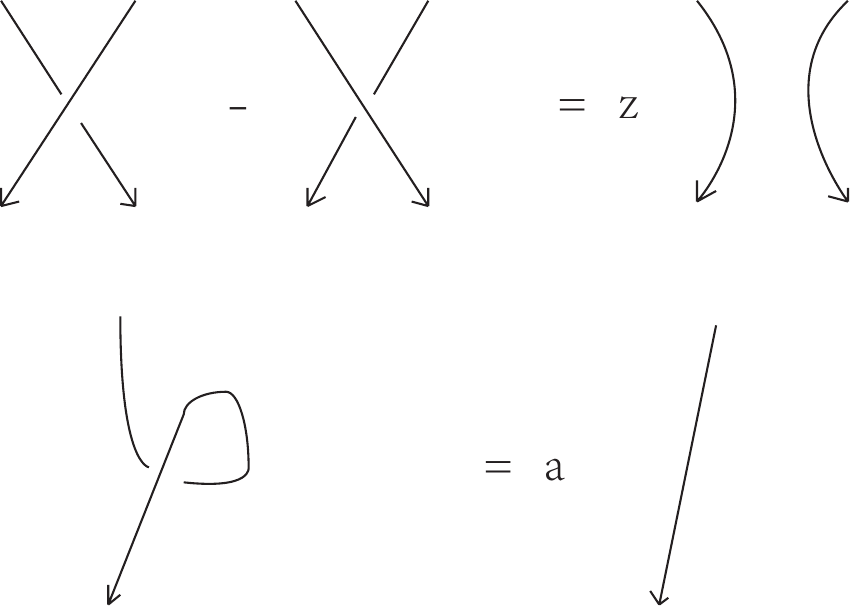}
\end{center}
\caption{Local relations}
\label{figure-local}
\end{figure}
It is easy to see that the removal of an unknot $U$ is equivalent to
time a scalar $s=\frac{a-a^{-1}}{q-q^{-1}}$, i.e., we have the
relation shown in Figure \ref{figure-removal}.
\begin{figure}[!htb]
\begin{center}
\includegraphics[width=100 pt]{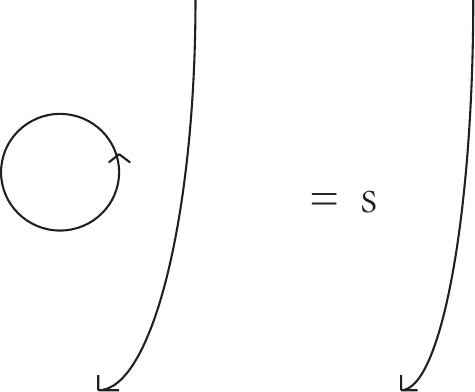}
\end{center}
\caption{Removal of an unknot}
\label{figure-removal}
\end{figure}

When $F=\mathbb{R}^2$, it is easy to follow that every element in $\mathcal{S%
}(F)$ can be represented as a scalar in $\Lambda$. For a link
$\mathcal{L}$
with a diagram $D_{\mathcal{L}}$, the resulting scalar $\langle D_{\mathcal{L%
}} \rangle \in \Lambda$ is the framed HOMFLY-PT polynomial $%
\mathcal{H}(\mathcal{L};q,a)$ of the link $\mathcal{L}$, i.e. 
\begin{align} \label{formula-H}
\mathcal{H}(
\mathcal{L};q,a)=\langle D_\mathcal{L}\rangle. 
\end{align}

We use the convention $%
\langle \ \rangle=1$ for the empty diagram; hence
\begin{align}
   \mathcal{H}(U;q,a)=\frac{
a-a^{-1}}{q-q^{-1}}. 
\end{align}

The classical HOMFLY-PT polynomial of a link $\mathcal{L}$ is given
by
\begin{align}
P(\mathcal{L};q,a)=\frac{a^{-w(\mathcal{L})}\mathcal{H}(\mathcal{L};q,a)}{%
\mathcal{H}(U;q,a)},
\end{align}
where $w(\mathcal{L})$ denotes the writhe number of the link diagram
$D_\mathcal{L}$.

When $F=S^1\times I$ is the annulus, where $I=[0,1]$, we denote $\mathcal{C}=\mathcal{S}%
(S^1\times I)$. $\mathcal{C}$ is a commutative algebra with the
product
induced by placing the annulus one outside other. As an algebra,
$\mathcal{C}$ is freely generated by the set $\{A_m: m\in
\mathbb{Z}\}$, $A_m$ for $m\neq 0$ is the closure of the braid
$\sigma_{|m|-1}\cdots \sigma_2\sigma_1$, the orientation of the
curve around the annulus is counter-clockwise for positive
$m$ and clockwise for negative $m$.  $%
A_0 $ is the empty diagram \cite{Turaev2}. It follows that
$\mathcal{C}$ contains two subalgebras $\mathcal{C}_{+}$ and
$\mathcal{C}_{-}$ that are generated by 
\begin{align}
  \{A_m: m\in \mathbb{Z},
m\geq 0\} \ \text{and} \ \{A_m:m\in \mathbb{Z},
m\leq 0\}.  
\end{align}
$\mathcal{C}_+$ can
be viewed as an algebra of symmetric functions.

 let $A_{i,j}$ be the closure of the braid $\sigma_{i+j}%
\sigma_{i+j-1}\cdots \sigma_{j+1}\sigma_{j}^{-1}\cdots
\sigma_1^{-1}$. We
define the element $X_m$ in $\mathcal{C}_+$ as 
\begin{align}
    X_m=
\sum_{i=0}^{m-1}A_{i,m-1-i}.
\end{align}
Then
$
P_m=\frac{\{1\}}{\{m\}}X_m
$ is the correspondence of power sum symmetric function $p_m(\mathbf{x})$ in $\mathcal{C}_+$.

Denoted by $Q_{\lambda}$ the closures of idempotent elements
$e_{\lambda}$
in Hecke algebra $H_m$ \cite{Ai}. \cite{L} showed that $Q_\lambda$ represent the Schur functions in the interpretation as
symmetric
functions. Hence $\{Q_{\lambda}\}_{\lambda\vdash m}$ forms the basis of $%
\mathcal{C}_m$. Furthermore, the Frobenius formula
(\ref{formula-frob}) gives
\begin{align} \label{formula-frobenisQ}
Q_\lambda=\sum_{\mu}\frac{\chi_{\lambda}(\mu)}{\mathfrak{z}_{\mu}}P_{\mu},
\end{align}
where $P_{\mu}=\prod_{i=1}^{l(\mu)}P_{\mu_i}$.

\subsection{Colored HOMFLY-PT invariants} \label{section-coloredH}
Let $\mathcal{L}$ be a framed oriented link with $L$ components:
$\mathcal{K}_1,..,\mathcal{K}_L$. For diagrams $Q_1,..,Q_L$ in the
skein model of annulus with the
positive oriented core $\mathcal{C}_+$, a link $\mathcal{L}$ decorated with $%
Q_1,...,Q_L$, denoted by $\mathcal{L}\star \otimes_{i=1}^{L} Q_i $,
is constructed by replacing every annulus $\mathcal{K}_{i}$ by the
annulus with the diagram $Q_i$ such that the orientations of the
cores match. Each $Q_i$ has a small backboard neighborhood in the
annulus which makes the decorated link
$\mathcal{L}\star\otimes_{i=1}^{L}Q_i$ a framed link (see
Figure \ref{figure-decrate} for a framed trefoil $\mathcal{K}$ decorated with skein
element $\mathcal{Q}$).

\begin{figure}[!htb]
\begin{align*}
\mathcal{K} \qquad\qquad\qquad\quad \mathcal{Q}
\qquad\qquad\qquad\quad
\mathcal{K}\star \mathcal{Q} \\
\includegraphics[width=50 pt]{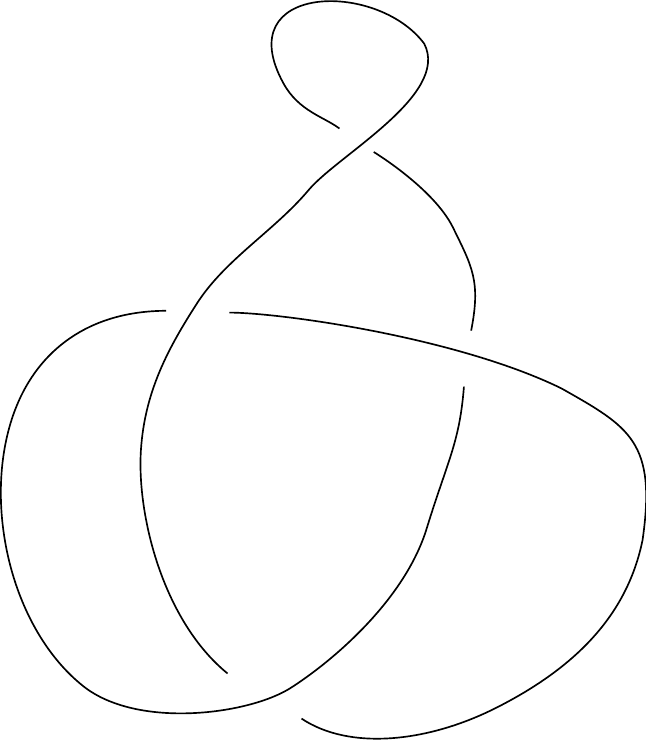}\qquad\qquad
\includegraphics[width=50
pt]{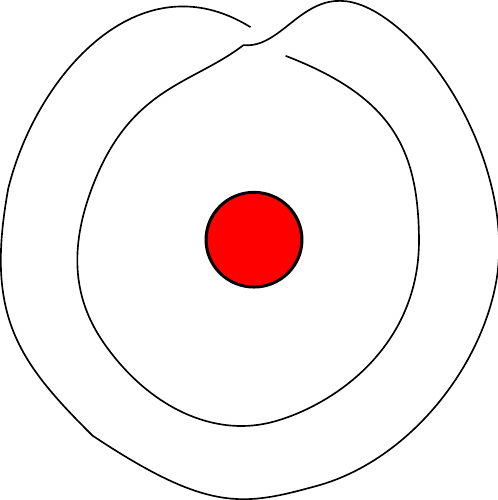} \qquad\quad
\includegraphics[width=50
pt]{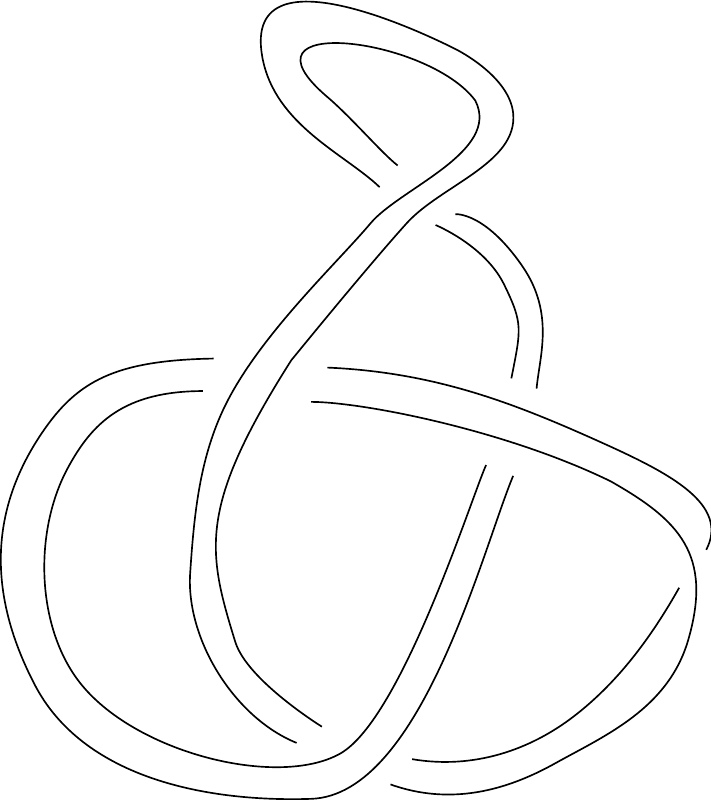}
\end{align*}%
\caption{$\mathcal{K}$ decorated by $\mathcal{Q}$} \label{figure-decrate}
\end{figure}

In particular, when $Q_{\lambda^\alpha}\in \mathcal{C}_{d_\alpha}$, where $%
\lambda^\alpha$ is a partition of a positive integer $d_\alpha$, for $%
\alpha=1,..,L$. Let $\vec{\lambda}=(\lambda^1,...,\lambda^L)$ and
 $Q_{\vec{\lambda}}=\otimes_{\alpha=1}^L
Q_{\lambda^\alpha}$.

\begin{definition}
   The framed colored HOMFLY-PT
invariant of $\mathcal{L}$ is defined as 
\begin{align}
    \mathcal{H} (\mathcal{L}\star Q_{\vec{\lambda}};q,a),
\end{align}
which is the framed HOMFLY-PT invariant of the new link $\mathcal{L}\star Q_{\vec{\lambda}}$.  Moreover, given $\vec{\mu}\in \mathcal{P}^L$,  taking $P_{\vec{\mu
}}=\otimes_{\alpha=1}^L P_{\mu^\alpha}$, we introduce the reformulated framed colored
HOMFLY-PT invariants as follows:
\begin{align} \label{formula-reformulatedinvariant}
\mathcal{Z}_{\vec{\mu}}(\mathcal{L};q,a)&=\mathcal{H}(\mathcal{L}\star
P_{\vec{\mu}};q,a), \\ \nonumber
\check{\mathcal{Z}}_{\vec{\mu}}(\mathcal{L};q,a)&=\{\vec{\mu}\}\mathcal{Z}_{\vec{\mu}}(\mathcal{L};q,a)=\{\vec{\mu}\}\mathcal{H}(\mathcal{L}\star
P_{\vec{\mu}};q,a).
\end{align}
\end{definition}

\subsection{Framed LMOV conejcture for framed knots}

 Given a framed oriented knot $\mathcal{K}$ in $S^3$ with framing $\tau\in \mathbb{Z}$,  we define the
framed Chern-Simons partition function
\begin{align} \label{framedCS}
\mathcal{Z}_{CS}(\mathcal{K};q,a)&=\sum_{\lambda\in
\mathcal{P}}(-1)^{\tau|\lambda|}\mathcal{H}(\mathcal{K}\star
Q_{\lambda};q,a)s_{\lambda}(x)\\\nonumber
&=1+\sum_{\mu\in \mathcal{P}_+}(-1)^{\tau|\mu|}
\frac{\mathcal{Z}_{\mu}(\mathcal{K};q,a)}{\mathfrak{z}_{\mu}}p_{\mu}(x),
\end{align}
and the framed free energy
\begin{align} \label{formula-CSFree}
\mathcal{F}_{CS}(\mathcal{K};q,a)=\log
\mathcal{Z}_{CS}(\mathcal{K};q,a)=\sum_{\mu\in \mathcal{P}_+}\mathcal{F}_{\mu}(\mathcal{K};q,a)p_{\mu}(x),
\end{align}

Let $f_{\lambda}(\mathcal{K};q,a)$ be a function
determined by the following formula:
\begin{align} \label{formula-CSf}
\mathcal{F}_{CS}(\mathcal{K};q,a)=\sum_{d=1}^\infty\frac{1}{d}\sum_{\lambda\in
\mathcal{P}_+}f_{\lambda}(\mathcal{K};q^d,a^d)s_{\lambda}(x^d).
\end{align}
We define
\begin{align*}
\hat{f}_{\mu}(\mathcal{K};q,a)=\sum_{\lambda}f_{\lambda}(\mathcal{K};q,a)M_{\lambda\mu}(q)^{-1},
\end{align*}
where
\begin{align*}
M_{\lambda\mu}(q)=\sum_{\nu}\frac{\chi_{\lambda}(\nu)\chi_{\mu}(\nu)}{\mathfrak{z}_{\nu}}\prod_{j=1}^{l(\nu)}(q^{\nu_{j}}-q^{-\nu_{j}}).
\end{align*}

Then the framed LMOV conjecture for framed knots \cite{MV,CLPZ23} states that:
\begin{conjecture} \label{conjecture-LMOV}
Let $\mathcal{K}$ be a framed oriented knot in $S^3$ with framing $\tau\in \mathbb{Z}$, for a fixed $\mu\in \mathcal{P}_+$, we have
\begin{align}
\hat{f}_{\mu}(\mathcal{K};q,a) \in
z^{-2}\mathbb{Z}[z^2,a^{\pm 1}],
\end{align}
where $z=q-q^{-1}$.
\end{conjecture}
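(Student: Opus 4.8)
The plan is to deduce the framed LMOV integrality from the family of Adams-operator congruences on the reformulated invariants $\check{\mathcal Z}$, of which the Hecke lifting Conjecture \ref{Heckeliftingconj} is precisely the prime, order-two instance. Since $\hat f_\mu\in z^{-2}\mathbb{Z}[z^2,a^{\pm1}]$ is equivalent to $z^2\hat f_\mu\in\mathbb{Z}[z^2,a^{\pm1}]$, I would split the target into three assertions and treat them separately: (a) \emph{pole cancellation} — $z^2\hat f_\mu$ is a Laurent polynomial in $q$, i.e.\ it has no pole at any root of unity other than the allowed double pole at $z=0$; (b) \emph{integrality} — its coefficients lie in $\mathbb{Z}$; and (c) \emph{evenness} — only even powers of $z$ occur, which follows from the $q\leftrightarrow q^{-1}$ symmetry of the HOMFLY-PT skein and is the most routine of the three. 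The substance is in (a), which is a statement about cancellation of cyclotomic denominators, while (b) rests on the skein-theoretic integrality of the building blocks.

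For the pole structure I would first make the transform explicit. Inverting the plethystic logarithm relating $\mathcal{F}_{CS}$ to the $f_\lambda$ in (\ref{formula-CSFree})--(\ref{formula-CSf}) and then applying the character transform $M^{-1}$ expresses $\hat f_\mu$, schematically, as a M\"obius-weighted sum over the divisors $d$ of $\gcd(\mu)$ (the gcd of the parts of $\mu$),
\[
\hat f_\mu(\mathcal K;q,a)=\sum_{d\mid \gcd(\mu)}\frac{c_d(\tau)}{d}\,\Psi_d\!\big(\check{\mathcal Z}_{\mu/d}(\mathcal K;q,a)+R_d\big),
\]
where $c_d(\tau)\in\{0,\pm1\}$ packages the M\"obius function together with the framing sign $(-1)^{(d-1)\tau|\mu|/d}$, the term $R_d$ collects lower-order corrections from the passage to the connected free energy, and $\mu/d$ is the partition whose parts are those of $\mu$ divided by $d$. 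The essential point is that both $M^{-1}$ and the Adams operator $\Psi_d$ of (\ref{definition-adams}) introduce cyclotomic denominators built from $\{d\}=[d]\{1\}$; consequently a pole of $\hat f_\mu$ at a primitive $N$-th root of unity can be produced only by the divisor $d=N$, and for $N$ with more than one prime factor the M\"obius weights $c_d(\tau)$ force it to cancel, exactly as in the unframed proof of \cite{LP1}. This reduces (a) to the prime-power roots of unity, where the surviving pole is governed by the difference $\check{\mathcal Z}_{N\nu}-(-1)^{(N-1)\tau|\nu|}\Psi_N(\check{\mathcal Z}_\nu)$.

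For $N=p$ prime this difference is exactly the object appearing in the Hecke lifting Conjecture \ref{Heckeliftingconj}: the congruence mod $[p]^2$ places it in $[p]^2\mathbb{Z}[z^2,a^{\pm1}]$, which cancels the $[p]$-denominators to the order demanded by the $z^{-2}\mathbb{Z}[z^2,a^{\pm1}]$ target and leaves only the double pole at the origin. This is the precise sense in which Conjecture \ref{Heckeliftingconj} — proven here for torus knots (Theorem \ref{theorem-torus}) and verified in the $a\to1$ limit (Theorem \ref{theorem-limit}) — is the order-two prime piece of framed LMOV. For assertion (b) I would invoke Morton's factorization (Theorem 1 of \cite{Morton}) together with the skein description of $P_\mu$ from Section \ref{Section-skein}: each $\check{\mathcal Z}_\mu=\{\mu\}\,\mathcal H(\mathcal K\star P_\mu)$ lies in $(a-a^{-1})\,\mathbb{Z}[z^2,a^{\pm1}]$, as already recorded in (\ref{formula-gp}); since $\Psi_d$ and the M\"obius/character transforms preserve $\mathbb{Z}[z^2,a^{\pm1}]$ once the denominators have been cleared, combining this with the cancellation above yields $z^2\hat f_\mu\in\mathbb{Z}[z^2,a^{\pm1}]$.

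The main obstacle lies at the \emph{prime-power} roots of unity. Cancelling the pole at a primitive $p^k$-th root of unity for $k\ge2$ requires strengthened congruences of the form $\check{\mathcal Z}_{p^k\nu}\equiv(\text{sign})\,\Psi_{p^k}(\check{\mathcal Z}_\nu)\bmod[p^k]^2$, and these do \emph{not} follow formally from the $k=1$ case by iterating $\Psi_p$: the naive iteration controls only the reduction mod $[p]^2$, not mod $[p^k]^2$, so it cannot supply the higher $p$-adic precision the LMOV pole order demands. Producing this missing precision — presumably through the congruence skein relations of \cite{CLPZ23}, refined to track prime-power moduli — is where the genuine difficulty sits, and it is compounded by the fact that even the $k=1$ input, Conjecture \ref{Heckeliftingconj} itself, is at present established only for torus knots and in the limit. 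For this reason I expect the above to reduce framed LMOV to the full tower of Hecke-lifting-type congruences rather than to settle it outright, which is consistent with its status as an open conjecture.
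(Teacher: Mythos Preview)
The statement you are attempting to prove is labelled \textbf{Conjecture}~\ref{conjecture-LMOV} in the paper, and the paper offers no proof of it. It is presented as an open problem; indeed, the paper's logical direction is the reverse of yours: Section~2.5 records that \cite{CLPZ23} showed framed LMOV \emph{implies} the Hecke lifting conjecture for knots, and the rest of the paper then attacks Hecke lifting directly (proving it for torus knots and in the $a\to1$ limit) as partial evidence for the stronger LMOV statement. There is therefore no ``paper's own proof'' to compare against.

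Your proposal is not a proof but a research outline for the converse implication, and you yourself identify the fatal gap: even granting Conjecture~\ref{Heckeliftingconj} in full, it only controls the difference $\check{\mathcal Z}_{p\nu}-(\text{sign})\Psi_p(\check{\mathcal Z}_\nu)$ modulo $[p]^2$, whereas pole cancellation in $\hat f_\mu$ at primitive $p^k$-th roots of unity with $k\ge2$ requires the analogous congruence modulo $[p^k]^2$, which is strictly stronger and does not follow by iterating the prime case. So your scheme reduces framed LMOV not to Hecke lifting but to an unproved infinite tower of strengthened congruences. Moreover, the prime case itself (Conjecture~\ref{Heckeliftingconj}) remains open in general, so even the first rung of the tower is unavailable outside torus knots. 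In short: the paper does not claim a proof, and your proposal, while a reasonable sketch of how one might try to climb back from Hecke-type congruences to LMOV, correctly diagnoses its own incompleteness.
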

For the case of a framed unknot,  the Conjecture \ref{conjecture-LMOV}  has been studied carefully
in \cite{LZ19,Zhu19}.

\subsection{Hecke lifting conjecture}
In \cite{CLPZ23}, we have shown that if the framed LMOV Conjecture \ref{conjecture-LMOV} holds, then it implies that the Hecke lifting Conjecture \ref{Heckeliftingconj} for framed knots holds, i.e.  
given a framed oriented knot $\mathcal{K}$ in $S^3$ with framing $\tau\in \mathbb{Z}$, for any prime number $p$, we have
\begin{align} \label{formula-heckelifting}
\check{\mathcal{Z}}_{p}(\mathcal{K};q,a)\equiv
(-1)^{(p-1)\tau}
\Psi_p(\check{\mathcal{Z}}(\mathcal{K};q,a)) \mod [p]^2.
\end{align}

In order to prove formula (\ref{formula-heckelifting}), we proposed two types of congruence skein relations for the reformulated colored HOMFLY-PT invariant in \cite{CLPZ23}. We found that these congruence skein relations imply that (\ref{formula-heckelifting}) also holds for framed links. That is the motivation for us to propose the Hecke lifting Conjecture \ref{Heckeliftingconj} for general framed links. However, although the framed LMOV conjecture can be formulated for general framed links, it cannot imply the Hecke lifting Conjecture \ref{Heckeliftingconj} for framed links. Hence, Hecke lifting Conjecture \ref{Heckeliftingconj} for framed links is of independent interests.

\section{Hecke lifting conjecture for torus knots} \label{section-Heckelifting-torus}
Given two relative prime positive integers $d$ and $m$. Let $T_{d}^{m}$ be the $(d,m)$-torus knot of $d$ strands with the canonical framing $dm$ as shown in Figure \ref{beta}. We define the fractional twist map 
\begin{align}
\mathfrak{f}^{\frac{m}{d}}: \mathcal{C}_+\rightarrow \mathcal{C}_+  
\end{align}
by 
\begin{align}
\mathfrak{f}^{\frac{m}{d}}(Q_\lambda)=\left(q^{\kappa_{\lambda}}a^{|\lambda|}\right)^\frac{m}{d}Q_{\lambda}.    
\end{align}
We need the following lemma. 
\begin{lemma}[cf. Lemma 4.2 in \cite{CLPZ23}] \label{Phi-lemma}
Given $\mu,\nu\in \mathcal{P}_+$, we define the following function
\begin{align*}
\phi_{\mu,\nu}(x)=\sum_{\lambda}\chi_\lambda(\mu)\chi_\lambda(\nu)x^{\kappa_\lambda},
\end{align*}
and we have
$$\phi_{(d),\mu}(x)=\frac{\prod_{i=1}^{l(\mu)}(x^{d\mu_i}-x^{-d\mu_i})}{x^d-x^{-d}}.$$
\end{lemma}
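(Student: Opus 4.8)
The plan is to exploit the fact that $\chi_\lambda((d))$ is supported only on hook partitions $\lambda$, together with the closed form for $\kappa_\lambda$ on hooks. Recall $\kappa_\lambda = \sum_i \lambda_i(\lambda_i - 2i + 1) = 2\sum_{(i,j)\in\lambda}(j-i)$ is the content sum. By the Murnaghan–Nakayama rule, $\chi_\lambda((d)\cup\mu') = 0$ unless $\lambda$ admits a border strip of size $d$, and when $\mu = (d)$ itself (i.e. the partition of $d$ with one part), $\chi_\lambda((d))$ vanishes unless $\lambda$ is a hook $\lambda = (d-k, 1^k)$ for some $0\le k\le d-1$, in which case $\chi_\lambda((d)) = (-1)^k$. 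This is the key structural input. So the first step is to write
\begin{align*}
\phi_{(d),\mu}(x) = \sum_{k=0}^{d-1} (-1)^k\, \chi_{(d-k,1^k)}(\mu)\, x^{\kappa_{(d-k,1^k)}}.
\end{align*}

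Next I would plug in the content sum of a hook: for $\lambda = (d-k,1^k)$ one computes $\kappa_\lambda = (d-k)(d-k-1) - k(k+1) = d(d-1) - 2dk = d(d-1-2k)$, so $x^{\kappa_\lambda} = x^{d(d-1)}\,(x^{-2d})^k$. Hence
\begin{align*}
\phi_{(d),\mu}(x) = x^{d(d-1)}\sum_{k=0}^{d-1}(-x^{-2d})^k\,\chi_{(d-k,1^k)}(\mu).
\end{align*}
The sum $\sum_{k}(-1)^k t^k \chi_{(d-k,1^k)}(\mu)$ is, up to normalization, a principal specialization: by the dual Jacobi–Trudi / Frobenius characteristic identity, $\sum_{k=0}^{d-1}(-1)^k s_{(d-k,1^k)} = $ the coefficient extraction that yields, in power-sum terms, exactly $p_{(d)}$-type data. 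More precisely one uses the classical identity $\sum_{k=0}^{d-1}(-1)^k s_{(d-k,1^k)}(y) = p_d(y)$ is false as stated, but the correct statement is that the generating function $\sum_{k\ge 0}(-1)^k e_k h_{d-k}$ relates hooks to power sums via $\sum_{k=0}^{d-1}(-1)^k\chi_{(d-k,1^k)}(\mu) t^k$ being a ratio of the form $\prod_i (t^{\mu_i}-1)/(t-1)$ after the substitution matching contents. The cleanest route: recall the known formula (e.g. from the theory of the quantum dimension / hook Schur functions) that $\sum_\lambda \chi_\lambda((d))\, x^{\kappa_\lambda}\, \dim_q$-type weights collapse, and in particular the identity we want is equivalent to
\begin{align*}
\sum_{k=0}^{d-1}(-x^{-2d})^k\,\chi_{(d-k,1^k)}(\mu) = x^{-d(d-1)}\,\frac{\prod_{i=1}^{l(\mu)}(x^{d\mu_i}-x^{-d\mu_i})}{x^d - x^{-d}}.
\end{align*}
I would prove this by induction on $l(\mu)$ using Murnaghan–Nakayama to strip one part $\mu_i$ off at a time, checking the base case $\mu = (m)$ directly: there $\chi_{(d-k,1^k)}((m)) = (-1)^k$ when $dm$... actually for a single part the character of a hook on a single-part partition of $dm$ — but $\mu$ here is a partition of $d$, so $|\mu|=d$ and the base case is $\mu=(d)$ giving $\sum_k (-x^{-2d})^k (-1)^k = \sum_k x^{-2dk} = \frac{1-x^{-2d^2}}{1-x^{-2d}}$, which matches $x^{-d(d-1)}\frac{x^{d^2}-x^{-d^2}}{x^d-x^{-d}}$ after simplification.

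The main obstacle I anticipate is the bookkeeping in the Murnaghan–Nakayama induction: each removal of a border strip of size $d\mu_i$ from a hook of size $d\cdot(\text{remaining})$ produces a sign $(-1)^{\text{ht}}$ and a smaller hook, and one must verify that summing the resulting signs against the powers $(-x^{-2d})^k$ telescopes into the product $\prod_i(x^{d\mu_i}-x^{-d\mu_i})$. An alternative that sidesteps this is to recognize $\phi_{(d),\mu}(x)$ as $\mathrm{tr}$ of a Jucys–Murphy-type element, or simply to cite that this is precisely Lemma 4.2 of \cite{CLPZ23} and reproduce its proof; since the statement is flagged as ``cf. Lemma 4.2 in \cite{CLPZ23}'', the intended proof is presumably just that reference, with the Murnaghan–Nakayama / hook-content computation above being the substance.
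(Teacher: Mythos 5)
Your reduction is correct and, once the induction step is actually written out, it gives a complete proof. Note that the paper itself offers no argument for this lemma --- it is quoted from Lemma 4.2 of \cite{CLPZ23} --- so there is nothing internal to compare against; your route is the natural one, and the three key computations all check out: $\chi_\lambda((d))$ is supported on hooks with $\chi_{(d-k,1^k)}((d))=(-1)^k$, the content sum is $\kappa_{(d-k,1^k)}=d(d-1-2k)$, and after substituting $u=x^{-2d}$ the lemma is equivalent to the classical identity
\begin{align*}
\sum_{k=0}^{d-1}(-1)^k\chi_{(d-k,1^k)}(\mu)\,u^{k}=\frac{\prod_{i=1}^{l(\mu)}\left(1-u^{\mu_i}\right)}{1-u},\qquad |\mu|=d.
\end{align*}

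Three things need repair. (1) The Murnaghan--Nakayama step is garbled: one strips a border strip of size $\mu_j$, not $d\mu_j$, from a hook of size $d$. The point that makes the induction painless is that for $m<d$ a hook $(a,1^{b})$ admits exactly two removable border strips of size $m$: the end of the arm (height $1$, sign $+1$, leaving $(a-m,1^{b})$) and the end of the leg (height $m$, sign $(-1)^{m-1}$, leaving $(a,1^{b-m})$). Writing $H_\mu(u)$ for the left-hand side above, stripping the last part $\mu_{l}$ then yields the recursion $H_\mu(u)=\left(1-u^{\mu_{l}}\right)H_{\mu\setminus\mu_{l}}(u)$, and your (correctly verified) single-row base case closes the induction. (2) Your aside that ``$\sum_{k=0}^{d-1}(-1)^k s_{(d-k,1^k)}=p_d$ is false as stated'' is itself wrong: that identity is true --- it is precisely the Frobenius expansion of $p_{(d)}$ and is equivalent to the hook-support fact you rely on. (3) The fallback of ``just cite \cite{CLPZ23}'' is what the paper does; if a proof is wanted, the recursion in (1) is the missing substance and should be carried through rather than gestured at.
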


\begin{proposition} \label{prop-fmd}
For any $d,m,k\in \mathbb{N}$, we have
   \begin{align}
 \mathfrak{f}^{\frac{m}{d}}(P_{kd})=a^{km}    \sum_{|\mu|=kd}\frac{P_{\mu}}{\mathfrak{z}_\mu}\frac{\{km\mu\}}{\{km\}}.
\end{align} 
\end{proposition}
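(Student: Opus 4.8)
The plan is to expand the power sum $P_{kd}$ in the Schur basis, apply the fractional twist map termwise using its definition on the $Q_\lambda$, and then collapse the resulting sum back into the power-sum basis using the orthogonality of characters together with Lemma~\ref{Phi-lemma}. Concretely, write $P_{kd} = \sum_{\lambda \vdash kd} \chi_\lambda((kd)) Q_\lambda$, which follows by inverting the Frobenius relation (\ref{formula-frobenisQ}) via character orthogonality (recall $P_\mu = \sum_\lambda \chi_\lambda(\mu) Q_\lambda$). Applying $\mathfrak{f}^{m/d}$ gives
\begin{align*}
\mathfrak{f}^{\frac{m}{d}}(P_{kd}) = \sum_{\lambda \vdash kd} \chi_\lambda((kd)) \left(q^{\kappa_\lambda} a^{|\lambda|}\right)^{m/d} Q_\lambda = a^{km} \sum_{\lambda \vdash kd} \chi_\lambda((kd)) q^{\kappa_\lambda m/d} Q_\lambda,
\end{align*}
since $|\lambda| = kd$. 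Now re-expand each $Q_\lambda$ in the $P_\mu$ basis using (\ref{formula-frobenisQ}) and exchange the order of summation:
\begin{align*}
\mathfrak{f}^{\frac{m}{d}}(P_{kd}) = a^{km} \sum_{|\mu| = kd} \frac{P_\mu}{\mathfrak{z}_\mu} \sum_{\lambda \vdash kd} \chi_\lambda((kd)) \chi_\lambda(\mu) \left(q^{m/d}\right)^{\kappa_\lambda} = a^{km} \sum_{|\mu| = kd} \frac{P_\mu}{\mathfrak{z}_\mu}\, \phi_{(kd),\mu}\!\left(q^{m/d}\right),
\end{align*}
where $\phi_{\mu,\nu}$ is the function from Lemma~\ref{Phi-lemma}. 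It then remains to evaluate $\phi_{(kd),\mu}(q^{m/d})$. By Lemma~\ref{Phi-lemma} with $d$ replaced by $kd$ and $x = q^{m/d}$,
\begin{align*}
\phi_{(kd),\mu}\!\left(q^{m/d}\right) = \frac{\prod_{i=1}^{l(\mu)}\left(q^{km\mu_i} - q^{-km\mu_i}\right)}{q^{km} - q^{-km}} = \frac{\{km\mu\}}{\{km\}},
\end{align*}
which gives exactly the claimed formula.

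The one genuinely delicate point is the appearance of the fractional power $q^{m/d}$ as the argument of $\phi$. Since $d, m$ are coprime and $T^m_d$ is a knot, the expressions $q^{\kappa_\lambda m/d}$ need not individually be Laurent polynomials in $q$; but $\kappa_\lambda$ for $\lambda \vdash kd$ is always divisible by... actually not — rather, one should verify that the identity of Lemma~\ref{Phi-lemma} holds as a formal identity in the variable $x$ (a root of $q$), so that substituting $x = q^{m/d}$ is legitimate, and that the final answer $\{km\mu\}/\{km\}$ lies back in $\mathbb{Z}[q^{\pm 1}]$ (which it does, being a ratio that is manifestly a Laurent polynomial since each $\mu_i \geq 1$). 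I would state the substitution $x \mapsto q^{m/d}$ as taking place in the polynomial ring $\mathbb{Q}[x^{\pm 1/d}]$ and note that the output, by the closed form, descends to $\mathbb{Q}(q^{\pm})$, so the identity in $\mathcal{C}_+$ is well-defined. I expect this bookkeeping about fractional powers to be the main obstacle to a clean write-up; the algebra itself is a routine double application of the Frobenius change of basis.
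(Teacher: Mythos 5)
Your proof is correct and follows essentially the same route as the paper's: expand $P_{kd}$ in the $Q_\lambda$ basis, apply $\mathfrak{f}^{m/d}$ termwise, re-expand via the Frobenius formula (\ref{formula-frobenisQ}), and evaluate the resulting character sum by Lemma~\ref{Phi-lemma} at $x=q^{m/d}$. Your extra remark on the legitimacy of substituting the fractional power $q^{m/d}$ is a sensible point of care that the paper passes over silently, but it does not change the argument.
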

\begin{proof}
By Lemma \ref{Phi-lemma}, we have
    \begin{align}  
\mathfrak{f}^\frac{m}{d}(P_{kd})&=\sum_{|\lambda|=kd}\chi_{\lambda}((kd))\mathfrak{f}
^\frac{m}{d}(Q_\lambda) \\
&=\sum_{|\lambda|=kd}\chi_\lambda((kd))\left(q^{\kappa_\lambda}a^{kd}\right)^{\frac{m}{d}}\sum_{|\mu|=kd}\frac{\chi_\lambda(\mu)}{\mathfrak{z}_\mu}P_\mu  \notag \\
&=a^{km}\sum_{|\mu|=kd}\frac{P_\mu}{\mathfrak{z}_\mu}\phi_{(kd),\mu}(q^\frac{m}{d})\\\nonumber
&=a^{km}\sum_{|\mu|=kd}\frac{P_{\mu}}{\mathfrak{z}_\mu}\frac{\{km\mu\}}{\{km\}}.
\end{align}
\end{proof}

Note that formula (5) in \cite{MM} gives
\begin{align}
   T_{d}^m \star P_{k}=\mathfrak{f}^{\frac{m}{d}}(P_{kd}).  
\end{align}
Then Proposition \ref{prop-fmd} implies that 
\begin{align}
T_{d}^m \star
P_{p}=a^{pm}\sum_{|\mu|=pd}\frac{P_{\mu}}{\mathfrak{z}_\mu}\frac{\{pm\mu\}}{\{pm\}}.
\end{align}
By the definition of reformulated invariants (\ref{formula-reformulatedinvariant}), we obtain 
\begin{align}
\check{\mathcal{Z}}_{p}(T_{d}^m;q,a)&=\{p\}a^{pm}\sum_{|\mu|=pd}\frac{\{\mu\}_a}{\mathfrak{z}_\mu
\{\mu\}}\frac{\{pm\mu\}}{\{pm\}}, 
\end{align}
and
\begin{align}
\check{\mathcal{Z}}(T_{d}^m;q,a)&=\{1\}a^{m}\sum_{|\mu|=d}\frac{\{\mu\}_a}{\mathfrak{z}_\mu
\{\mu\}}\frac{\{m\mu\}}{\{m\}}.
\end{align}

Applying the Adams operator (\ref{definition-adams}), we obtain
\begin{align}
\Psi_p(\check{\mathcal{Z}}(T_{d}^m;q,a))=\{p\}a^{pm}\sum_{|\mu|=d}\frac{\{p\mu\}_a}{\mathfrak{z}_\mu
\{p\mu\}}\frac{\{pm\mu\}}{\{pm\}}.
\end{align}
The Hecke lifting conjecture for $T_{d}^m$ is equivalent to the statement:
\begin{align}
    \{p\}a^{pm}\left(\sum_{|\mu|=pd}\frac{\{pm\mu\}}{\mathfrak{z}_{\mu}\{pm\}}\frac{\{\mu\}_a}{\{\mu\}}
    -(-1)^{(p-1)dm}\sum_{|\nu|=d}\frac{\{pm\nu\}}{\mathfrak{z}_\nu \{pm\}}\frac{\{p\nu\}_a}{\{p\nu\}}\right)\in [p]^2\mathbb{Z}[z^2,a^{\pm 1}].
\end{align}
Set
\begin{align} \label{formula-Fpdm}
 F_{p,d,m}(q,a)= \frac{\{1\}^2}{\{p\}}a^{pm}\left(\sum_{|\mu|=pd}\frac{\{pm\mu\}}{\mathfrak{z}_{\mu}\{pm\}}\frac{\{\mu\}_a}{\{\mu\}}
    -(-1)^{(p-1)dm}\sum_{|\nu|=d}\frac{\{pm\nu\}}{\mathfrak{z}_\nu \{pm\}}\frac{\{p\nu\}_a}{\{p\nu\}}\right),   
\end{align}
so in order to prove Theorem \ref{theorem-torus}, we only need to show that
\begin{align}
   F_{p,d,m}(q,a)\in \mathbb{Z}[z^2,a^{\pm 1}].
\end{align}

We introduce the function
\begin{align}
    Q_{n}(x^k)=\frac{x^{nk}-x^{-nk}}{x^k-x^{-k}}=(x^{k})^{n-1}+(x^{k})^{n-3}+\cdots+(x^{k})^{-(n-3)}+(x^k)^{-(n-1)},
\end{align}
and use it to first prove the following two lemmas.  
\begin{lemma} \label{lemma-1}
Given two relative prime positive integers $d$ and $m$.  For any partition $\nu$ with $|\nu|=d$ and $l(\nu)=l$, we have 
\begin{align}
\frac{\{p^2m\nu\}-(-1)^{(p-1)dm}p^l\{pm\nu\}}{[pm][p]\{p\nu\}p^{l}}\in \mathbb{Q}[z^2].
\end{align}
\end{lemma}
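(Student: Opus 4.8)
\textbf{Proof proposal for Lemma \ref{lemma-1}.}

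The plan is to reduce the claimed divisibility to elementary statements about cyclotomic-type polynomials in the single variable $q$. First I would observe that $\{p^2m\nu\} = \prod_{i=1}^l \{p^2 m \nu_i\}$ and $\{pm\nu\} = \prod_{i=1}^l \{pm\nu_i\}$, and that $\frac{\{p^2 m\nu_i\}}{\{pm\nu_i\}} = Q_p(q^{pm\nu_i})$ is a polynomial in $z^2$ (being symmetric under $q\mapsto q^{-1}$ and a polynomial in $q^{\pm 1}$). Factoring $\{p^2m\nu\} = \{pm\nu\}\prod_{i=1}^l Q_p(q^{pm\nu_i})$, the numerator becomes $\{pm\nu\}\bigl(\prod_{i=1}^l Q_p(q^{pm\nu_i}) - (-1)^{(p-1)dm}p^l\bigr)$, so after cancelling one factor of $\{pm\nu\}$ against part of $\{p\nu\}$ — more precisely writing $\{pm\nu_i\} = \{p\nu_i\}\,Q_m(q^{p\nu_i})$ when we pull out $\{pm\}$ appropriately — the task is to show
\begin{align}
\frac{\prod_{i=1}^l Q_p(q^{pm\nu_i}) - (-1)^{(p-1)dm}p^l}{[pm][p]\,p^l}\cdot(\text{polynomial factor})\in\mathbb{Q}[z^2].
\end{align}

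Next I would handle the two denominator factors $[p]$ and $[pm]$ and the power $p^l$ separately. For $[p]$: note $Q_p(x) \equiv p \pmod{[p]}$ in the sense that $Q_p(q^{k}) - p$ is divisible by $[p]$ whenever... actually the cleaner route is the congruence $q^{p} \equiv \pm 1$ behaviour — I would use that modulo the cyclotomic polynomial $\Phi_{2p}$ (a factor of $[p]$ up to units, since $[p]=\frac{q^p-q^{-p}}{q-q^{-1}}$ has roots the primitive $2k$-th roots of unity for $k\mid p$, $k>1$), one has $q^{pm\nu_i}\mapsto (\zeta^{m\nu_i})$ with $\zeta$ a primitive $2p$-th root; then $Q_p(q^{pm\nu_i})$ evaluated there equals $p$ if $2p\nmid 2pm\nu_i\cdot(\text{shift})$... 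The precise statement I want is: $Q_p(q^{pt}) \equiv (-1)^{(p-1)t}\,p \pmod{[p]}$ for any integer $t$, which follows by evaluating at roots of $[p]$ and using $q^{pt} = (q^p)^t \equiv (\pm 1)^t$ there. Multiplying over $i=1,\dots,l$ and using $\sum_i \nu_i = d$ gives $\prod_i Q_p(q^{pm\nu_i}) \equiv (-1)^{(p-1)dm}p^l \pmod{[p]}$, which kills one factor of $[p]$ in the denominator. For the factor $[pm]$, the same computation with roots of $[pm]$ (where $q^{pm}\equiv\pm1$, hence $q^{pm\nu_i}\equiv\pm1$ and $Q_p(q^{pm\nu_i})\equiv p$, but now no sign since $q^{pm}$ itself is the relevant power) shows $\prod_i Q_p(q^{pm\nu_i}) \equiv p^l \pmod{[pm]}$; combined with $(-1)^{(p-1)dm}$ being a unit this handles $[pm]$ provided the signs are compatible, which they are because on the roots of $[pm]$ we have $q^{2pm}=1$ so $(-1)^{(p-1)dm}\equiv 1$ there in the relevant sense. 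The remaining $p^l$ in the denominator I would absorb by a finer analysis: write $Q_p(q^{pm\nu_i}) = p + [p]\cdot g_i(q)$ for a Laurent polynomial $g_i$ (valid since $Q_p - p$ vanishes to first order at each simple root of $[p]$ and $[p]$ is essentially squarefree up to roots of unity multiplicities — here one must be slightly careful, but $[p]$ is squarefree as a Laurent polynomial for $p$ prime), so $\prod_i Q_p(q^{pm\nu_i}) = \prod_i(p + [p]g_i)$; expanding, the constant term is $p^l$, and I would show the sign-twisted version $\prod_i(\pm p + [p]g_i)$ has all coefficients of $[p]^j$ for $j\ge 1$ individually divisible by $p^{l-1}$ — this is where the primality of $p$ and a valuation argument on the $g_i$ enter.

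Then I would assemble: after dividing by $p^l[p]$ we are left with an expression whose numerator is divisible by $[pm]$ and lies in $\mathbb{Z}[z^2,q^{\pm1}]$ symmetric in $q\leftrightarrow q^{-1}$; dividing by $[pm]$ and observing that the quotient of a symmetric Laurent polynomial by $[pm]$ (when divisibility holds in $\mathbb{Q}[q^{\pm1}]$) is again symmetric, hence lies in $\mathbb{Q}[z^2]$, completes the argument. The main obstacle I anticipate is the $p^l$ factor: the naive congruence arguments only extract one power of $[p]$ and one of $[pm]$, but the statement demands cancelling $p^l$ as well, so the heart of the proof is a $p$-adic valuation estimate showing that each factor $Q_p(q^{pm\nu_i})$ contributes a full factor of $p$ that survives the expansion of the product — concretely, that $\frac{Q_p(q^{pmt}) - (-1)^{(p-1)t\cdot(\text{sign})}p}{p}$ still makes sense integrally after dividing by the appropriate cyclotomic factor, which should follow from the fact that $\frac{Q_p(y)-p}{[p]_y}$ (the analogous one-variable quantity) has $p$-integral, indeed $p$-divisible-after-one-step, coefficients because $Q_p(y) = 1 + y + \cdots + y^{p-1}$ (up to normalization) and $\binom{p}{j}\equiv 0\pmod p$ for $0<j<p$.
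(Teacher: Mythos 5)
Your reduction of the numerator to $\{pm\nu\}\bigl(\prod_{i=1}^l Q_p(q^{pm\nu_i}) - (-1)^{(p-1)dm}p^l\bigr)$ and your congruence $Q_p(q^{pt})\equiv (-1)^{(p-1)t}p \pmod{[p]}$ are both correct, and the final symmetry step (invariance under $q\mapsto q^{-1}$ and $q\mapsto -q$ forces membership in $\mathbb{Q}[z^2]$) is the same one the paper uses. But the middle of the argument has two genuine gaps and one misplaced emphasis.

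First, $[p]$ and $[pm]$ are not coprime: $[pm]=[p]\,Q_m(q^p)$, so every root of $[p]$ is a root of $[pm]$ and the denominator $[p][pm]$ vanishes to order two there. Establishing $N\equiv 0\pmod{[p]}$ and $N\equiv 0\pmod{[pm]}$ separately therefore only yields divisibility by $[pm]$, one factor of $[p]$ short. To cancel $[p][pm]$ you must show second-order vanishing of the numerator at the roots of $[p]$; this is exactly the derivative computation $\frac{d}{dx}Q_{pm}(x^{p\nu_i})\big|_{x=\alpha}=0$ that the paper carries out (in your setup it would follow from $Q_p'(\pm 1)=0$, but you never invoke it).

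Second, your claimed congruence $\prod_i Q_p(q^{pm\nu_i})\equiv p^l \pmod{[pm]}$, with the sign dismissed because ``$(-1)^{(p-1)dm}\equiv 1$ there in the relevant sense,'' is false for $p=2$ with $dm$ odd. Take $p=2$, $d=1$, $m=3$, $\nu=(1)$ and $\beta=e^{i\pi/3}$, a root of $[6]$ with $\beta^6=1$: then $Q_2(\beta^6)-(-1)^{3}\cdot 2=2+2=4\neq 0$. The vanishing of the full expression at such $\beta$ comes from the factor $\prod_i Q_m(q^{p\nu_i})$ that you split off and set aside, and proving that it vanishes there is precisely where the hypothesis $(d,m)=1$ enters (for each $m_0\mid m$, $m_0>1$, some part $\nu_i$ is not divisible by $m_0$). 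Your proposal never uses $(d,m)=1$, which is a reliable sign that the $[pm]$ step cannot be complete: without coprimality the statement fails.

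Finally, the $p^l$ in the denominator, which you identify as ``the heart of the proof'' and propose to attack with a $p$-adic valuation estimate, is a non-issue here: the target ring is $\mathbb{Q}[z^2]$, so dividing by the nonzero rational $p^l$ is free. (Integrality over $\mathbb{Z}$ is obtained in the paper only at the very end of the proof of Theorem \ref{theorem-torus}, by a separate global argument combining Proposition 3.2 of \cite{CLPZ23} with the fact that $[p]^2$ is monic in $z^2$.) So the effort is directed at the wrong difficulty while the actual one — the shared roots of $[p]$ and $[pm]$ and the role of $(d,m)=1$ — is left unaddressed.
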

\begin{proof}
Let 
\begin{align}
  f_{p,m,\nu}(x)=\frac{\prod_{i=1}^{l}Q_{pm}(x^{p\nu_i})-(-1)^{(p-1)dm}p^l\prod_{i=1}^lQ_{m}(x^{p\nu_i})}{Q_{pm}(x)Q_p(x)},
\end{align}
then
\begin{align}
   \frac{\{p^2m\nu\}-(-1)^{(p-1)dm}p^l\{pm\nu\}}{[pm][p]\{p\nu\}p^{l}}=f_{p,m,\nu}(q). 
\end{align}
So we only need to prove
\begin{align}
    f_{p,m,\nu}(x)\in \mathbb{Q}[(x-x^{-1})^2].
\end{align}

Let 
\begin{align}
  g_{p,m,\nu}(x)=\prod_{i=1}^{l}Q_{pm}(x^{p\nu_i})-(-1)^{(p-1)dm}p^l\prod_{i=1}^lQ_{m}(x^{p\nu_i}). 
\end{align}

(i) For the case $p=2$, we need to prove that
\begin{align}
    \frac{g_{2,m,\nu}(x)}{Q_{2m}(x)Q_2(x)}\in \mathbb{Q}[(x-x^{-1})^2].
\end{align}

Suppose $\alpha$ is a root of $Q_2(x)$, i.e. $\alpha^2=-1$, it is easy to see $Q_{2m}(\alpha)=0$,  $\frac{dQ_{2m}(x)}{dx}|_{x=\alpha}\neq 0$. 
So $\alpha$ is a double root of $Q_{2m}(x)Q_2(x)$. 
We have 
\begin{align}
    \prod_{i=1}^lQ_{2m}(\alpha^{2\nu_i})=\prod_{i=1}^l(2m(-1)^{\nu_i})=(2m)^l(-1)^d.
\end{align}
and 
\begin{align}
    \prod_{i=1}^lQ_{m}(\alpha^{2\nu_i})=m^l((-1)^d)^{m-1}. 
\end{align}
Then 
\begin{align}
  g_{2,m,\nu}(\alpha)&=\prod_{i=1}^lQ_{2m}(\alpha^{2\nu_i})-(-1)^{dm}2^l\prod_{i=1}^lQ_{m}(\alpha^{2\nu_i})\\\nonumber
  &=(2m)^l(-1)^d-(-1)^{dm}2^lm^l((-1)^d)^{m-1}=0.
\end{align}
Moreover, we compute that, for $1\leq i\leq l$ 
\begin{align}
    \frac{dQ_{2m}(x^{2\nu_i})}{dx}|_{x=\alpha}=0, \ 
    \frac{dQ_{m}(x^{2\nu_i})}{dx}|_{x=\alpha}=0. 
\end{align}
It implies that 
\begin{align}
 \frac{dg_{2,m,\nu}(x)}{dx}|_{x=\alpha}=0.    
\end{align}

Now, suppose $\beta$ is a root of $Q_{2m}(x)$, but not a root of $Q_2(x)$. Then, we have 
\begin{align}
    \beta^{2m}=\pm 1, \ \beta^2\neq \pm 1.
\end{align}
$\beta$ is a simple root of the polynomial $Q_{2m}(x)Q_2(x)$. 
We can write $\beta=e^{\frac{s\pi \sqrt{-1}}{2m_0}}$ with $(s,2m_0)=1$ and $m_0|m$ but $m_0\neq \pm 1$. 
We observe that for $i=1,...,l$, there is at least one $\nu_i$ such that $m_0\nmid \nu_i$. Since if for all $i$, we have
$m_0\mid \nu_i$, then we must have $m_0|d$ which contradicts the condition $(d,m)=1$.  
Since
\begin{align}
 Q_{2m}(\beta^{2\nu_i})=\frac{(\beta^{2\nu_i})^{2m}-(\beta^{2\nu_i})^{-2m}}{\beta^{2\nu_i}-\beta^{-2\nu_i}},   
\end{align}
if $m_0\nmid \nu_i$, then $\beta^{2\nu_i}-\beta^{-2\nu_i}\neq 0$ and $(\beta^{2\nu_i})^{2m}-(\beta^{2\nu_i})^{-2m}=0$, 
hence $Q_{2m}(\beta^{2\nu_i})=0$.  
Therefore, we have $\prod_{i=1}^lQ_{2m}(\beta^{2\nu_i})=0$. Similarly, we also have $\prod_{i=1}^lQ_{m}(\beta^{2\nu_i})=0$, i.e. 
\begin{align}
    g_{2,m,\nu}(\beta)=0. 
\end{align}

Since all the possible roots of $Q_{2m}(x)Q_m(x)$ are $\alpha,\beta$, from the above analysis we obtain 
$f_{2,m,\nu}(x)$ is a polynomial of $x$. Moreover, it is easy to see that 
\begin{align}
    f_{2,m,\nu}(x)=f_{2,m,\nu}(x^{-1}) \ \text{and} \ f_{2,m,\nu}(-x)=f_{2,m,\nu}(x). 
\end{align}
We obtain 
\begin{align}
    f_{2,m,\nu}(x)\in \mathbb{Q}[(x-x^{-1})^2]. 
\end{align}

(ii) For the generic odd prime $p$,  

\begin{align}
  g_{p,m,\nu}(x)=\prod_{i=1}^{l}Q_{pm}(x^{p\nu_i})-p^l\prod_{i=1}^lQ_{m}(x^{p\nu_i}). 
\end{align}

We consider the roots of $Q_{pm}(x)Q_p(x)$.  Suppose $\alpha$ is a root of  $Q_p(x)$, we write $\alpha=e^{\frac{s\pi \sqrt{-1}}{p}}$. 
Then, we obtain $Q_{pm}(\alpha)=0$ and $\frac{d Q_{pm}(x)}{dx}|_{x=\alpha}\neq 0$. So $\alpha$ is a double root of  $Q_{pm}(x)Q_p(x)$.

If $m$ is even, we obtain 
\begin{align}
     Q_{pm}(y)&=y^{pm-1}+\cdots+y+y^{-1}+\cdots +y^{-(pm-1)}, \\\nonumber
     Q_{m}(y)&=y^{m-1}+\cdots+y+y^{-1}+\cdots+y^{-(m-1)}. 
\end{align}
Then 
\begin{align}
 Q_{pm}(\alpha^{p\nu_i})=Q_{pm}((-1)^{s\nu_i})=(-1)^{s\nu_i}pm, \ Q_{m}(\alpha^{p\nu_i})=Q_{m}((-1)^{s\nu_i})=(-1)^{s\nu_i}m.
\end{align}
So we obtain 
\begin{align}
g_{p,m,\nu}(\alpha)=(-1)^{sd}(pm)^l-p^l(-1)^{sd}m^l=0.
\end{align}

If $m$ is odd, then $pm$ is odd since $p$ is an odd prime. In this case 
\begin{align}
     Q_{pm}(y)&=y^{pm-1}+\cdots+y^2+1+y^{-2}+\cdots +y^{-(pm-1)}, \\\nonumber
     Q_{m}(y)&=y^{m-1}+\cdots+y^2+1+y^{-2}+\cdots +y^{-(m-1)}. 
\end{align}
Then
\begin{align}
 Q_{pm}(\alpha^{p\nu_i})=Q_{pm}((-1)^{s\nu_i})=pm, \ Q_{m}(\alpha^{p\nu_i})=Q_{m}((-1)^{s\nu_i})=m.
\end{align}
So we obtain 
\begin{align}
g_{p,m,\nu}(\alpha)=(pm)^l-p^lm^l=0.
\end{align}

Moreover, we compute that, for $1\leq i\leq l$ 
\begin{align}
    \frac{dQ_{pm}(x^{p\nu_i})}{dx}|_{x=\alpha}=0, \ 
    \frac{dQ_{m}(x^{p\nu_i})}{dx}|_{x=\alpha}=0. 
\end{align}
It implies that 
\begin{align}
 \frac{dg_{p,m,\nu}(x)}{dx}|_{x=\alpha}=0.    
\end{align}

Now, suppose $\beta$ is a root of $Q_{pm}(x)$, but not a root of $Q_p(x)$. Then, we have 
\begin{align}
    \beta^{pm}=\pm 1, \ \beta^p\neq \pm 1.
\end{align}
$\beta$ is a root of one order of the polynomial $Q_{pm}(x)Q_p(x)$. 
We can write $\beta=e^{\frac{s\pi \sqrt{-1}}{pm_0}}$ with $(s,pm_0)=1$ and $m_0|m$ but $m_0\neq \pm 1$. 
We observe that for $i=1,...,l$, there is at least one $\nu_i$ such that $m_0\nmid \nu_i$. Since if for all $i$, we have
$m_0\mid \nu_i$, then we must have $m_0|d$ which contradicts the condition $(d,m)=1$.  
Since
\begin{align}
 Q_{pm}(\beta^{p\nu_i})=\frac{(\beta^{p\nu_i})^{pm}-(\beta^{p\nu_i})^{-pm}}{\beta^{p\nu_i}-\beta^{-p\nu_i}},   
\end{align}
if $m_0\nmid \nu_i$, then $\beta^{p\nu_i}-\beta^{-p\nu_i}\neq 0$ and $(\beta^{p\nu_i})^{pm}-(\beta^{p\nu_i})^{-pm}=0$, 
hence $Q_{pm}(\beta^{p\nu_i})=0$.  
Therefore, we have $\prod_{i=1}^lQ_{pm}(\beta^{p\nu_i})=0$. Similarly, we also have $\prod_{i=1}^lQ_{m}(\beta^{p\nu_i})=0$, i.e. 
\begin{align}
    g_{p,m,\nu}(\beta)=0. 
\end{align}

Since all the possible roots of $Q_{pm}(x)Q_m(x)$ are $\alpha,\beta$, from the above analysis we obtain 
$f_{p,m,\nu}(x)$ is a polynomial of $x$. Moreover, it is easy to see that 
\begin{align}
    f_{p,m,\nu}(x)=f_{p,m,\nu}(x^{-1}) \ \text{and} \ f_{p,m,\nu}(-x)=f_{p,m,\nu}(x). 
\end{align}
We obtain 
\begin{align}
    f_{p,m,\nu}(x)\in \mathbb{Q}[(x-x^{-1})^2]. 
\end{align}
\end{proof}

\begin{lemma} \label{lemma-2}
Given two relative prime positive integers $d$ and $m$. For any partition $\mu$ with $|\mu|=pd$ and $p\nmid \mu$, we have 
\begin{align}
    \frac{\{pm\mu\}}{[p][pm]\{\mu\}}\in \mathbb{Q}[z^2]. 
\end{align}
\end{lemma}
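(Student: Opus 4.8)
The plan is to follow the strategy of Lemma~\ref{lemma-1}: realize the quantity as the value at $x=q$ of a Laurent polynomial in $x$ that is symmetric under $x\mapsto x^{-1}$ and under $x\mapsto -x$, and then note that any such Laurent polynomial lies in $\mathbb{Q}[(x-x^{-1})^2]$. With $Q_n(y)=\frac{y^n-y^{-n}}{y-y^{-1}}$ one has $[p]=Q_p(q)$, $[pm]=Q_{pm}(q)$ and $\frac{\{pm\mu\}}{\{\mu\}}=\prod_{i=1}^{l(\mu)}Q_{pm}(q^{\mu_i})$, so setting $N(x):=\prod_{i=1}^{l(\mu)}Q_{pm}(x^{\mu_i})$ it suffices to prove
\begin{align*}
h_{p,m,\mu}(x):=\frac{N(x)}{Q_p(x)\,Q_{pm}(x)}\in\mathbb{Q}[(x-x^{-1})^2].
\end{align*}
By the elementary identity $Q_{pm}(y)=Q_p(y)\,Q_m(y^p)$ the denominator equals $Q_p(x)^2\,Q_m(x^p)$, and each numerator factor is $Q_{pm}(x^{\mu_i})=Q_p(x^{\mu_i})\,Q_m(x^{p\mu_i})$. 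Since $x^{p-1}Q_p(x)$ and $x^{p(m-1)}Q_m(x^p)$ are divisors of the squarefree polynomials $x^{2p}-1$ and $x^{2pm}-1$, the Laurent polynomials $Q_p(x)$ and $Q_m(x^p)$ have only simple roots, and they are coprime because a root of $Q_p$ satisfies $x^{2p}=1$ while a root of $Q_m(x^p)$ satisfies $x^{2p}\neq1$. Hence it is enough to establish, in $\mathbb{Q}[x^{\pm1}]$, the two divisibilities $Q_p(x)^2\mid N(x)$ and $Q_m(x^p)\mid N(x)$; equivalently, that $N$ vanishes to order $\geq2$ at each root of $Q_p$ and to order $\geq1$ at each root of $Q_m(x^p)$.

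The key combinatorial observation is that $T:=\{i:p\nmid\mu_i\}$ has $|T|\geq2$: from $|\mu|=pd$ we get $\sum_{i\in T}\mu_i\equiv|\mu|\equiv0\pmod p$, which excludes $|T|=1$, while $|T|=0$ is excluded by the hypothesis $p\nmid\mu$. Now take a root $\alpha$ of $Q_p$, so $\alpha^{2p}=1$ and $\alpha^{2}\neq1$; as $p$ is prime, $\alpha^{2}$ has multiplicative order $p$. For $i\in T$ this gives $\alpha^{2\mu_i}\neq1$, whereas $\alpha^{p\mu_i}=(\alpha^p)^{\mu_i}=\pm1$, so $Q_p(\alpha^{\mu_i})=\frac{\alpha^{p\mu_i}-\alpha^{-p\mu_i}}{\alpha^{\mu_i}-\alpha^{-\mu_i}}=0$ and therefore $Q_{pm}(\alpha^{\mu_i})=0$. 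Since $|T|\geq2$, at least two of the $l(\mu)$ factors of $N$ vanish at $\alpha$, so $N$ vanishes there to order $\geq2$. Next take a root $\beta$ of $Q_m(x^p)$, so $\beta^{2pm}=1$ and $\beta^{2p}\neq1$. If $\beta^{2\mu_i}=1$ for all $i$, then $e:=$ the order of $\beta^2$ divides every $\mu_i$, hence $|\mu|=pd$, and also $pm$, so $e\mid\gcd(pd,pm)=p\gcd(d,m)=p$, contradicting $\beta^{2p}\neq1$. Thus $\beta^{2\mu_i}\neq1$ for some $i$, and since $\beta^{pm\mu_i}=(\beta^{pm})^{\mu_i}=\pm1$ we again obtain $Q_{pm}(\beta^{\mu_i})=0$, so $N$ vanishes at $\beta$. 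This proves both divisibilities, hence $h_{p,m,\mu}\in\mathbb{Q}[x^{\pm1}]$.

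Finally, $h_{p,m,\mu}$ is invariant under $x\mapsto x^{-1}$ because $Q_n(y^{-1})=Q_n(y)$; and under $x\mapsto -x$ the relation $\{n\}_{-x}=(-1)^n\{n\}_x$ yields $h_{p,m,\mu}(-x)=(-1)^{\,p(pmd-d-m-1)}h_{p,m,\mu}(x)$, where the exponent is even — trivially for $p=2$, and for odd $p$ because it is congruent mod $2$ to $(d+1)(m+1)$, which is even since $\gcd(d,m)=1$ forbids $d,m$ from both being even. A Laurent polynomial fixed by $x\mapsto x^{-1}$ is a polynomial in $x+x^{-1}$, and if it is also fixed by $x\mapsto -x$ it is an even polynomial in $x+x^{-1}$, hence lies in $\mathbb{Q}[(x+x^{-1})^2]=\mathbb{Q}[(x-x^{-1})^2]$; evaluating at $x=q$ gives $\frac{\{pm\mu\}}{[p][pm]\{\mu\}}=h_{p,m,\mu}(q)\in\mathbb{Q}[z^2]$. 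The one substantive step is the order-$2$ vanishing of $N$ at the roots of $Q_p$, which rests on ``$|T|\geq2$'' and ``$\alpha^2$ has order $p$'' — precisely where primality of $p$ and the hypothesis $p\nmid\mu$ enter — together with the computation $\gcd(pd,pm)=p$; everything else is routine bookkeeping.
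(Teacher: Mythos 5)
Your proof is correct and takes essentially the same route as the paper: show that $\prod_i Q_{pm}(x^{\mu_i})$ vanishes to order $\geq 2$ at the roots of $Q_p$ (using that at least two parts of $\mu$ are prime to $p$) and to order $\geq 1$ at the remaining roots of $Q_{pm}$, then conclude via the symmetries $x\mapsto x^{-1}$ and $x\mapsto -x$. Your factorization $Q_{pm}(y)=Q_p(y)Q_m(y^p)$ and the $\gcd(pd,pm)=p$ argument merely streamline the paper's case analysis ($m_0=p$ versus $m_0\neq p$) at the simple roots, and your explicit parity check of the $x\mapsto -x$ symmetry fills in a step the paper leaves to the reader.
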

\begin{proof}
Let
\begin{align}
    f_{p,m,\mu}(x)=\frac{\prod_{i=1}^lQ_{pm}(x^{\mu_i})}{Q_{pm}(x)Q_{p}(x)},
\end{align}
then 
\begin{align}
  \frac{\{pm\mu\}}{[p][pm]\{\mu\}}=f_{p,m,\mu}(q).
\end{align}
In the following, we will prove that 
\begin{align}
f_{p,m,\mu}(q)\mathbb{Q}[(x-x^{-1})^2].    
\end{align}

Suppose $\alpha=e^{\frac{s\pi \sqrt{-1}}{p}}$ is a root of $Q_{p}(x)$, then $\alpha$ is a double root of $Q_{p}(x)Q_{pm}(x)$.

By conditions $|\mu|=pd$ and $p\nmid \mu$, we note that the length  $l(\mu)$ of the partition $\mu$ should $l(\mu)\geq 2$, and there are at least two parts, say $\mu_i$ and $\mu_j$, that satisfy 
\begin{align}
    p\nmid \mu_i, \ p\nmid \mu_j. 
\end{align}
Therefore, we obtain 
\begin{align}
    \alpha^{pm\mu_i}-\alpha^{-pm\mu_i}=0, \  \alpha^{pm\mu_j}-\alpha^{-pm\mu_j}=0, 
\end{align}
but 
\begin{align}
    \alpha^{\mu_i}-\alpha^{-\mu_i}\neq 0, \  \alpha^{\mu_j}-\alpha^{-\mu_j}\neq 0. 
\end{align}
Hence, 
\begin{align}
    Q_{pm }(\alpha^{\mu_i})=Q_{pm}(\alpha^{\mu_j})=0.
\end{align}

Let $\beta=e^{\frac{s\pi \sqrt{-1}}{pm_0}}$ with $(s,pm_0)=1$ and $m_0|m$ but $m_0\neq \pm 1$. $\beta$ is a root of $Q_{pm}(x)$ but  not a root of $Q_p(x)$. So, it is a single root of $Q_{pm}(x)Q_p(x)$.  

If $m_0=p$, by the above analysis we know that there is $\mu_i$ such that $p\nmid \mu_i$, hence $\beta^{\mu_i}-\beta^{-\mu_i}\neq 0$. Clearly, $\beta^{pm\mu_i}-\beta^{-pm\mu_i}=0$. Hence $Q_{pm}(\beta^{\mu_i})=0$. 

If $m_0\neq p$,  using the condition $(d,m)=1$, we know that there is at least one part, say $\mu_k$, such that $m_0\nmid \mu_k$. If not, then we have $m_0|pd$, and hence $m_0|d$, which contradicts $(d,m)=0$.  
Hence $\beta^{\mu_k}-\beta^{-\mu_k}\neq 0$. Clearly, $\beta^{pm\mu_k}-\beta^{-pm\mu_k}=0$. Hence $Q_{pm}(\beta^{\mu_k})=0$.

Since all the possible roots of $Q_{pm}(x)Q_m(x)$ are $\alpha,\beta$, from the above analysis we obtain 
$f_{p,m,\mu}(x)$ is a polynomial of $x$. It is easy to see that 
\begin{align}
    f_{p,m,\mu}(x)=f_{p,m,\mu}(x^{-1}) \ \text{and} \ f_{p,m,\mu}(-x)=f_{p,m,\mu}(x). 
\end{align}
Finally, we obtain 
\begin{align}
    f_{p,m,\mu}(x)\in \mathbb{Q}[(x-x^{-1})^2]. 
\end{align}
\end{proof}

Now, we can finish the proof of Theorem \ref{theorem-torus}. 
\begin{proof}
In order to prove that
\begin{align}
 F_{p,d,m}(q,a)\in \mathbb{Z}[z^2,a^{\pm 1}],     
\end{align}
we divide the first summation in (\ref{formula-Fpdm}) into two parts: 
\begin{align}
    \sum_{|\mu|=pd}\frac{\{pm\mu\}}{\mathfrak{z}_{\mu}[pm][p]}\frac{\{\mu\}_a}{\{\mu\}}
    &=\sum_{|\mu|=pd, p\mid \mu }\frac{\{pm\mu\}}{\mathfrak{z}_{\mu}[pm][p]}\frac{\{\mu\}_a}{\{\mu\}}+\sum_{|\mu|=pd, p\nmid \mu }\frac{\{pm\mu\}}{\mathfrak{z}_{\mu}[pm][p]}\frac{\{\mu\}_a}{\{\mu\}}\\\nonumber
    &=\sum_{|\nu|=d}\frac{\{p^2m\nu\}}{\mathfrak{z}_{\nu}p^{l(\mu)}[pm][p]}\frac{\{p\nu\}_a}{\{p\nu\}}+\sum_{|\mu|=pd, p\nmid \mu }\frac{\{pm\mu\}}{\mathfrak{z}_{\mu}[pm][p]}\frac{\{\mu\}_a}{\{\mu\}}.
\end{align}
Hence, $F_{p,d,m}(q,a)$ can be written as 
\begin{align}
     F_{p,d,m}(q,a)&=\sum_{|\nu|=d}\frac{1}{\mathfrak{z}_\nu}\left(\frac{\{p^2m\nu\}}{p^{l(\mu)}[pm][p]\{p\nu\}}-(-1)^{(p-1)dm}\frac{\{pm\nu\}}{ [pm][p]\{p\nu\}}\right)\{p\nu\}_a\\\nonumber
     &+\sum_{|\mu|=pd, p\nmid \mu }\frac{\{pm\mu\}}{\mathfrak{z}_{\mu}[pm][p]}\frac{\{\mu\}_a}{\{\mu\}}. 
\end{align}
By Lemmas \ref{lemma-1} and \ref{lemma-2}, we obtain
\begin{align}
    F_{p,d,m}(q,a)\in \mathbb{Q}[z^2,a^{\pm 1}]. 
\end{align}
The formula (\ref{formula-Fpdm}) gives 
\begin{align}
\check{\mathcal{Z}}_p(T_{d}^m;q,a)-\Psi_p(\check{\mathcal{Z}}(T_d^m;q,a))=[p]^2F_{p,d,m}(q,a).     
\end{align}
Moreover, Proposition 3.2 in \cite{CLPZ23} implies that
\begin{align}
\check{\mathcal{Z}}_p(T_{d}^m;q,a)-\Psi_p(\check{\mathcal{Z}}(T_d^m;q,a))\in \mathbb{Z}[z^2,a^{\pm 1}].    
\end{align}
Together with the fact that $[p]^2\in \mathbb{Z}[z^2]$ is a monic polynomial of $z^2$, it is easy to obtain
\begin{align}
    F_{p,d,m}(q,a)\in \mathbb{Z}[z^2,a^{\pm 1}]. 
\end{align}
\end{proof}

\section{Colored Alexander polynomial and Hecke lifting conjecture} \label{Section-coloredAlexander}
Given a framed knot $\mathcal{K}$ with framing $\tau\in \mathbb{Z}$, the (framing-independent) colored HOMFLY-PT
invariant of $\mathcal{K}$ colored by a partition $\lambda$ is defined as 
\begin{align}
   W_{\lambda}(\mathcal{K};q,a)=q^{-\kappa_{\lambda}\tau}a^{-|\lambda|\tau}\mathcal{H}(\mathcal{K}\star Q_{\lambda};q,a). 
\end{align}
In \cite{IMMM12}, Itoyama et al. considered the following limit 
\begin{align}
A_{\lambda}(\mathcal{K};q)=\lim_{a\rightarrow
1}\frac{W_{\lambda}(\mathcal{K};q,a)}{W_{\lambda}(U;q,a)}
\end{align}
which is well-defined \cite{LP1,Morton}. When $\lambda=(1)$, by its definition, we obtain
$A_{(1)}(\mathcal{K};q):=A(\mathcal{K};q)$, which is just the Alexander
polynomial for the knot $\mathcal{K}$, so we call
$A_{\lambda}(\mathcal{K};q)$ the colored Alexander polynomial of
$\mathcal{K}$. 

In \cite{BMMSS11}, Dunin-Barkowski et al. studied the properties of
$A_{\lambda}(\mathcal{K};q)$, they proposed the following conjectural formula
\begin{align} \label{formula-Alambda}
A_{\lambda}(\mathcal{K};q)=A(\mathcal{K};q^{|\lambda|})
\end{align}
when $\lambda$ is a hook partition.
In \cite{Zhu13}, it was proved that
(\ref{formula-Alambda}) holds for the torus knot. Recently, in \cite{MPS25}, Morozov, Popolitov and Sleptsov
proved the conjectural formula (\ref{formula-Alambda}) for any knot, i.e., we have
\begin{theorem}[cf. Theorem 1.1 in \cite{MPS25}] \label{theorem-coloredAlex}
When $\lambda$ is a hook partition, for any knot $\mathcal{K}$, 
\begin{align}
    A_{\lambda}(\mathcal{K};q)=A(\mathcal{K};q^{|\lambda|}).
\end{align}
\end{theorem}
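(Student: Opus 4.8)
The plan is to compute $W_\lambda(\mathcal{K};q,a)$ through the Reshetikhin--Turaev functor and then control its degeneration as $a\to 1$. First I would present $\mathcal{K}$ as the closure $\widehat{\beta}$ of a braid $\beta$ on $b$ strands; then $W_\lambda(\mathcal{K};q,a)$ equals a suitably normalized quantum trace of the operator $\rho_\lambda(\beta)$, where $\rho_\lambda$ is the braid-group representation on $V_\lambda^{\otimes b}$ assembled from the $U_q(\mathfrak{gl}_N)$ $R$-matrix under the usual substitution $a=q^N$, and $V_\lambda$ is the irreducible of highest weight $\lambda$. Dividing by $W_\lambda(U;q,a)$ normalizes the unknot to $1$, so that $A_\lambda(\mathcal{K};q)$ is exactly the $a\to 1$ limit of this normalized trace. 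The goal is then to show that, for a hook $\lambda$ with $|\lambda|=n$, this limit reproduces the fundamental-representation computation with $q$ replaced by $q^{n}$, which is precisely $A(\mathcal{K};q^{n})$.

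The decisive input is the behaviour of the representation-theoretic data in the limit $a\to 1$, i.e. $N\to 0$. Here the $\mathfrak{gl}_N$ structure degenerates to that of the quantum supergroup $U_q(\mathfrak{gl}(1|1))$, whose finite-dimensional irreducibles of nonzero quantum superdimension are indexed precisely by hook diagrams (the $(1|1)$-hook phenomenon); for a non-hook $\lambda$ the superdimension vanishes and the naive limit is degenerate, which is exactly why the hook hypothesis is needed. I would prove that, for a hook $\lambda$ with $|\lambda|=n$, the spectrum of the $R$-matrix on $V_\lambda\otimes V_\lambda$, the associated eigenspace multiplicities, and the normalized quantum superdimension all coincide after the limit with the fundamental-representation data evaluated at $q^{n}$. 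The essential phenomenon to isolate is that the arm and leg contents of the hook enter the $R$-matrix eigenvalues only through the total box number $n$ once $a\to 1$, while all finer dependence on the shape collapses.

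With this in hand I would conclude by functoriality of the Reshetikhin--Turaev construction. The invariant is built from the $R$-matrix, its inverse, and the (co)evaluation and quantum-trace morphisms; once each of these elementary pieces for the hook $V_\lambda$ matches, after the $a\to 1$ limit, the fundamental-representation morphism at parameter $q^{n}$, the two invariants agree for every braid word, and taking closures yields $A_\lambda(\mathcal{K};q)=A(\mathcal{K};q^{n})$. The overall scalar discrepancies coming from the writhe are accounted for precisely by the framing factor $q^{-\kappa_\lambda\tau}a^{-|\lambda|\tau}$ already built into $W_\lambda$, so they cancel in the reduced invariant, consistently with the fact that $A_\lambda$ is framing independent.

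The main obstacle is the second step: determining the $R$-matrix spectrum and the quantum superdimension of the hook module $V_\lambda$ in the singular limit $a\to 1$, and establishing the clean collapse $q\mapsto q^{n}$. Concretely this requires a careful study of the typical $U_q(\mathfrak{gl}(1|1))$-modules arising as limits of $V_\lambda$ and of their tensor powers, together with a character or eigenvalue computation verifying that every trace of $\rho_\lambda(\beta)$ loses its dependence on the arm--leg decomposition and retains only $|\lambda|$. An alternative, more skein-theoretic route would work directly with the annulus elements $Q_\lambda$ and the eigenvalues of the meridian maps, using Morton's factorization (Theorem 1 in \cite{Morton}) to isolate the Alexander limit; but even there the same collapse-to-$q^{n}$ statement would remain the crux of the argument.
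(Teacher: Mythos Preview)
This theorem is not proved in the present paper at all: it is quoted verbatim from \cite{MPS25} and used as a black box in the derivation of Theorem~\ref{theorem-limit}. There is therefore no proof here to compare your proposal against. What the paper does do is take formula~(\ref{formula-Alambda}) as given, insert it into the expression for $\lim_{a\to 1}\mathcal{H}(\mathcal{K}\star Q_{(m|n)};q,a)/(a^d-a^{-d})$, and then sum over hooks to extract the behaviour of $\check{\mathcal{Z}}_p$ in the Alexander limit.

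That said, your sketch is broadly in line with the approach of \cite{MPS25} itself, whose title already advertises a direct argument through the Reshetikhin--Turaev formalism. Your identification of the key step---that for a hook $\lambda$ the $R$-matrix data on $V_\lambda^{\otimes b}$ in the $a\to 1$ limit collapses to the fundamental-representation data at $q^{|\lambda|}$---is exactly the technical heart one must establish, and you correctly flag it as the main obstacle rather than claiming it is routine. The $U_q(\mathfrak{gl}(1|1))$ heuristic you invoke is the right intuition for why hooks are singled out. Where your proposal remains incomplete is precisely where you say it does: you have not actually carried out the eigenvalue and multiplicity analysis, and a genuine proof must either do this computation or find a structural shortcut (e.g.\ via cabling or skein relations) that bypasses it. So as a strategy your outline is sound and matches the literature, but it is a plan rather than a proof.
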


Note that formula (\ref{formula-Alambda}) has a form similar to the Hecke lifting Conjecture \ref{Heckeliftingconj} for a knot. Motivating us to consider the relationship between them.  In the remainder of this section, we show that Theorem \ref{theorem-coloredAlex} actually implies Theorem \ref{theorem-limit}. Note that the essential part of this proof has been obtained in our work \cite{Zhu22}.

First, we recall that every hook partition of weight $d$ can be
presented as the form $(m+1,1,...,1)$ with $n+1$ length for some $m,
n\in \mathbb{Z}_{\geq 0}$, denoted by $(m|n)$, with $m+n+1=d$. It is
clear that $\kappa_{(m|n)}=(m-n)d$.

Let $U$ be an unknot, by formula (5.14) in \cite{LZ},
\begin{align}
W_{\lambda}(U;q,a)=\sum_{\mu}\frac{\chi_{\lambda}(\mu)}{\mathfrak{z}_{\mu}}\prod_{i=1}^{l(\mu)}\frac{\{\mu\}_a}{\{\mu\}},    
\end{align}
Using the character of the symmetry group:
\begin{align}
\chi_{(m|n)}((d))=(-1)^n,     
\end{align}
we have
\begin{align}
  W_{(m|n)}(U;q,a)=\frac{(-1)^n}{d}\frac{a^d-a^{-d}}{q^d-q^{-d}}+\sum_{l(\mu)\geq
2}\sum_{\mu}\frac{\chi_{(m|n)}(\mu)}{\mathfrak{z}_{\mu}}\prod_{j=1}^{l(\mu)}\frac{\{\mu_j\}_a}{\{\mu_j\}}.  
\end{align}

By the definition of the colored Alexander polynomial, we have
\begin{align}
A_{(m|n)}(\mathcal{K};q)&=\lim_{a\rightarrow
1}\frac{q^{-(m-n)d\tau}a^{-d\tau}\mathcal{H}(\mathcal{K}\star
Q_{(m|n)};q,a)}{W_{(m|n)}(U;q,a)}\\\nonumber
&=q^{-(m-n)d\tau}(-1)^nd(q^{d}-q^{-d})\lim_{a\rightarrow
1}\frac{\mathcal{H}(\mathcal{K}\star Q_{(m|n)};q,a)}{(a^d-a^{-d})}.
\end{align}
Then Theorem \ref{theorem-coloredAlex} implies that
\begin{align} \label{formula-lima}
\lim_{a\rightarrow 1}\frac{\mathcal{H}(\mathcal{K}\star
Q_{(m|n)};q,a)}{(a^d-a^{-d})}&=(-1)^n\frac{q^{(m-n)d\tau}}{d(q^d-q^{-d})}A_{(m|n)}(\mathcal{K};q)\\\nonumber
&=(-1)^n\frac{q^{(m-n)d\tau}}{d(q^d-q^{-d})}A(\mathcal{K};q^d).
\end{align}

Moreover, Frobenius formula (\ref{formula-frobenisQ}) follows
\begin{align}
    P_{d}=\sum_{\lambda}\chi_{\lambda}((d))Q_{\lambda}=\sum_{m+n+1=d}(-1)^nQ_{(m|n)}, 
\end{align}
since 
\makeatletter    \label{formula-chid}
\let\@@@alph\@alph
\def\@alph#1{\ifcase#1\or \or $'$\or $''$\fi}\makeatother
\begin{subnumcases}
{\chi_\lambda((d))=} (-1)^n, &\text{if $\lambda$ is a hook partition
$(m|n)$}\\\nonumber 0, &\text{otherwise.}
\end{subnumcases}
\makeatletter\let\@alph\@@@alph\makeatother
Therefore, 
\begin{align}
\lim_{a\rightarrow 1}\frac{\mathcal{H}(\mathcal{K}\star
P_d;q,a)}{a^{d}-a^{-d}}&=\lim_{a\rightarrow
1}\sum_{m+n+1=d}\frac{(-1)^n\mathcal{H}(\mathcal{K}\star
Q_{(m|n)};q,a)}{a^{d}-a^{-d}}\\\nonumber
&=\frac{A(\mathcal{K};q^d)}{d(q^d-q^{-d})}\sum_{m+n+1=d}q^{(m-n)dw(\mathcal{K})}.
\end{align}

On the other hand, for the definition of Alexander polynomial, we
have
\begin{align}
A(\mathcal{K};q)=(q-q^{-1})\lim_{a\rightarrow
1}\frac{\mathcal{H}(\mathcal{K};q,a)}{a-a^{-1}}.
\end{align}
Then
\begin{align}
\lim_{a\rightarrow
1}\Psi_{d}\left(\frac{\mathcal{H}(\mathcal{K};q,a)}{a-a^{-1}}\right)=\frac{A(\mathcal{K};q^d)}{(q^d-q^{-d})}.
\end{align}
Hence, we obtain
\begin{align} \label{formula-limH}
&\lim_{a\rightarrow 1}\left(\frac{\mathcal{H}(\mathcal{K}\star
P_d;q,a)}{(a^{d}-a^{-d})}-(-1)^{(d-1)\tau)}\Psi_{d}\left(\frac{\mathcal{H}(\mathcal{K};q,a)}{a-a^{-1}}\right)\right)\\\nonumber
&=\frac{A(\mathcal{K};q^d)}{d(q^d-q^{-d})}\left(\sum_{m+n+1=d}q^{(m-n)d\tau}-(-1)^{(d-1)\tau}\right).
\end{align}
In the following, we use the identity (\ref{formula-limH}) to prove Theorem \ref{theorem-limit}.

Given a prime $p$, by formula  (\ref{formula-limH}), we obtain
\begin{align} \label{formula-1}
&\lim_{a\rightarrow
1}\left(\frac{\mathcal{H}(\mathcal{K}\star
P_p;q,a)}{a-a^{-1}}-(-1)^{(p-1)\tau}\frac{\Psi_{p}\left(\mathcal{H}(\mathcal{K};q,a)\right)}{a-a^{-1}}\right)\\\nonumber
&=p\lim_{a\rightarrow 1}\left(\frac{\mathcal{H}(\mathcal{K}\star
P_p;q,a)}{(a^p-a^{-p})}-(-1)^{(p-1)\tau}\frac{\Psi_{p}\left(\mathcal{H}(\mathcal{K};q,a)\right)}{(a^p-a^{-p})}\right)\\\nonumber
&=\frac{A(\mathcal{K};q^p)}{(q^p-q^{-p})}\left(\sum_{m+n+1=p}q^{(m-n)d\tau}-(-1)^{(p-1)\tau}\right).
\end{align}
Using Lemma 7.8 in \cite{Zhu22} , we have
\begin{align} \label{formula-2}
\sum_{m+n+1=p}q^{(m-n)d\tau}-(-1)^{(p-1)\tau}=[p]^2\alpha_{p}^{\tau}(z),
\end{align}
where 
\begin{align} \label{formula-3}
\alpha_{p}^{\tau}(z)\in \mathbb{Z}[z^2].
\end{align}

Recall that
\begin{align}
g_p(\mathcal{K};q,a)&=\check{\mathcal{Z}}_p(\mathcal{K};q,a)-(-1)^{(p-1)\tau}\Psi_{p}(\check{\mathcal{Z}}(\mathcal{K};q,a))\\\nonumber
&=\{p\}\left(\mathcal{H}(\mathcal{K}\star
P_p;q,a)-(-1)^{(p-1)\tau}\Psi_{d}\left(\mathcal{H}(\mathcal{K};q,a)\right)\right).
\end{align}
Therefore, by (\ref{formula-1}), (\ref{formula-2}) and (\ref{formula-3}), 
\begin{align}
\lim_{a\rightarrow 1}\frac{g_p(\mathcal{K};q,a)}{(a-a^{-1})}=[p]^2A(\mathcal{K};q^p)\alpha_{p}^{\tau}(z).
\end{align}
Moreover, using the properties of the Alexander polynomial in our
notation, we have
\begin{align}
A(\mathcal{K};-q^p)=A(\mathcal{K};q^p), \
A(\mathcal{K};q^{-p})=A(\mathcal{K};q^p),
\end{align}
which implies that
\begin{align}
A(\mathcal{K};q^p)\in \mathbb{Z}[z^2].
\end{align}
So, we obtain 
\begin{align}
  \lim_{a\rightarrow 1}\frac{g_p(\mathcal{K};q,a)}{(a-a^{-1})}\in [p]^2\mathbb{Z}[z^2].  
\end{align}
The proof of Theorem \ref{theorem-limit} is complete.


\begin{thebibliography}{99}

\bibitem{Ai} A. K. Aiston, \emph{Skein theoretic idempotents of Hecke
algebras and quantum group invariants}. PhD. thesis, University of
Liverpool, 1996.


\bibitem{BMMSS11} P. Dunin-Barkowski, A. Mironov, A. Morozov, A. Sleptsov and A. Smirnov, {\em Superpolynomials for toric knots from evolution induced by cut-and-join
operators}. J High Energy Phys, 2013, 2013: 21

\bibitem{CLPZ23} Q. Chen, K. Liu, P. Peng and S. Zhu, {\em Congruence Skein Relations for Colored HOMFLY-PT Invariants}, Commun. Math. Phys. 400, 683-729 (2023).  


\bibitem{GV} R. Gopakumar and C. Vafa, \emph{On the gauge theory/geometry
correspondence}. Adv. Theor. Math. Phys., 3(5):1415-1443, 1999.


\bibitem{HM} R. J. Hadji, H. R. Morton, \emph{A basis for the full Homfly
skein of the annulus}, Math. Proc. Cambridge Philos. Soc. 141
(2006),  no. 1, 81-100.


\bibitem{IMMM12} H. Itoyama, A. Mironov, A. Morozov and A. Morozov,
{\em HOMFLY and superpolynomials for figure eight knot in all
symmetric and antisymmetric representations}, JHEP 1207,131(2012),
[arXiv: 1203.5978].

\bibitem{Jones} V. Jones, \emph{Hecke algebra representations of braid
groups and link polynomial}, Ann. Math. 126 (1987), 335-388.


\bibitem{L} S. G. Lukac, \emph{Idempotents of the Hecke algebra become Schur
functions in the skein of the annulus}, Math. Proc. Camb. Phil. Soc
138 (2005), 79-96.


\bibitem{LP1} K. Liu and P. Peng, \emph{Proof of the
Labastida-Mari\~no-Ooguri-Vafa conjecture}. J. Differential Geom.,
85(3):479-525, 2010.


\bibitem{LM0} W. B. R Lickorish and K. C. Millett, \emph{A polynomial
invariant of oriented links}, Topology \textbf{26} (1987) 107.


\bibitem{LM} J. M. F. Labastida and M. Mari\~no, \emph{A new point of view
in the theory of knot and link invariants}. J. Knot Theory
Ramifications, 11(2):173-197, 2002.

\bibitem{LMV} J. M. F. Labastida, Marcos Mari\~no and Cumrun Vafa. \emph{%
Knots, links and branes at large N}. J. High Energy Phys.,
(11):Paper 7-42, 2000.

\bibitem{LZ} X.-S. Lin and H. Zheng, \emph{On the Hecke algebra and the
colored HOMFLY polynomial}, Trans. Amer. Math. Soc.  362  (2010),
no. 1, 1-18.

\bibitem{LZ19} W. Luo and S. Zhu, \emph{Integrality of the LMOV
invariants for framed unknot}, Commun. Number Theory Phys. 13
(2019), no. 1, 81-100.

\bibitem{Mac} I. G. Macdonald, \emph{Symmetric functions and Hall polynomials%
}. Oxford Mathematical Monographs. The Clarendon Press Oxford
University Press, New York, second edition, 1995. With contributions
by A. Zelevinsky, Oxford Science Publications.



\bibitem{Morton} H. R. Morton, {\em Integrality of HOMFLY 1-tangle invariants}. Algebraic \& Geometric Topology 7(2007), 327-338.

\bibitem{MM} H. R. Morton and P. M. G. Manchon, \emph{Geometrical relations
and plethysms in the Homfly skein of the annulus}, J. London Math.
Soc. 78 (2008), 305-328.


\bibitem{MV} M. Mari\~no and C. Vafa. \emph{Framed knots at large N}. In
Orbifolds in mathematics and physics (Madison, WI, 2001), volume 310
of Contemp. Math., pages 185-204. Amer. Math. Soc., Providence, RI,
2002.


\bibitem{MPS25} A. Morozov, A. Popolitov and A. Sleptsov, \emph{Direct proof of one-hook scaling property for Alexander polynomial from Reshetikhin-Turaev formalism}, J. Geom. Phys. vol. 209 (2025) 105410.

\bibitem{OV} H. Ooguri and C. Vafa. \emph{Knot invariants and topological
strings}. Nuclear Phys. B, 577(3):419-438, 2000.


\bibitem{RT} N. Reshetikhin and V. Turaev, \emph{ Ribbon graphs and their invariants derived from quantum groups}. Comm Math Phys, 1990, 127: 1–26.


\bibitem{Turaev} V. G. Turaev, \emph{The Yang-Baxter equation and invariants
of links}, Invent. Math. 92(1988), 527-553.

\bibitem{Turaev2} V. G. Turaev, \emph{The Conway and Kauffman modules of a
solid torus}. Zap. Nauchn. Sem. Leningrad. Otdel. Mat. Inst.
Steklov. (LOMI) 167 (1988), Issled. Topol. 6, 79-89.

\bibitem{Witten} E. Witten, \emph{Quantum field theory and the Jones
polynomial}, Commun. Math. Phys. \textbf{121} (1989) , 351.

\bibitem{Witten2} E. Witten, \emph{Chern-Simons gauge theory as a string
theory}. In The Floer memorial volume, volume 133 of Progr. Math.,
pages 637-678. Birkh\"{a}user, Basel, 1995.


\bibitem{Zhu13} S. Zhu, \emph{Colored HOMFLY polynomials via skein theory}, J.
High. Energy. Phys. 10(2013), 229.

\bibitem{Zhu19} S. Zhu, \emph{On explicit formulae of LMOV invariants}, J.
High. Energy. Phys. 10(2019), 076.


\bibitem{Zhu22} S. Zhu, \emph{New structures for colored HOMFLY-PT invariants},
Sci. China Math, 2023 Vol. 66 No. 2: 341–392.

\end{thebibliography}
\end{document}